\pdfoutput=1

%%%%%%%%%%%%%%%%%%%%%%%%%%%%%%%%%%%%%%%%%%%%%%%%%%%%%%%%%%%%%%%%%%%%%%%%%%%%%%%%%%%%%%%%%
%
%       Last Edited:  SLD 07/23/2016
%
%%%%%%%%%%%%%%%%%%%%%%%%%%%%%%%%%%%%%%%%%%%%%%%%%%%%%%%%%%%%%%%%%%%%%%%%%%%%%%%%%%%%%%%%%

\documentclass[11pt]{amsart}
\usepackage{amsmath,amsthm,amssymb,amscd,eucal}        
\usepackage{graphicx, color}
\usepackage[all]{xy}
\usepackage[margin=1.3in]{geometry}
\usepackage{enumerate}

%%%%%%%%%%%%%%%%%%%%%%%%%%%%%%%%%%%%%%%%%%%%%%%%%%%%%%%%%%%%%%%%%%%%%%%%%%%%%%%%%%%%%%%%%

% Sam's editing tools

%\renewcommand{\includegraphics}[2][]{\fbox{}}

%%%%%%%%%%%%%%%%%%%%%%%%%%%%%%%%%%%%%%%%%%%%%%%%%%%%%%%%%%%%%%%%%%%%%%%%%%%%%%%%%%%%%%%%%

\begin{document}

\definecolor{mains}{cmyk}{0, .80, .80, 0} 

\newcommand{\half}{{\textstyle{\frac{1}{2}}}}
\newcommand{\balpha}{{\boldsymbol{\alpha}}}
\newcommand{\B}{{\bf 1}}
\newcommand{\e}{{\bf e}}
\newcommand{\y}{{\bf r}}
\newcommand{\x}{{\bf x}} 
\newcommand{\tx}{{\bar{x}}}                   
\newcommand{\bn}{{\bf n}}

%%%%%%%%%%%%%%%%%%%%%%%%%%%%%%%%%%%%%%%%%%%%%%%%%%%%%%%%%%%%%%%%%%%%%%%%%%%%%%%%%%%%%%%%%
%
%  notation
%
%%%%%%%%%%%%%%%%%%%%%%%%%%%%%%%%%%%%%%%%%%%%%%%%%%%%%%%%%%%%%%%%%%%%%%%%%%%%%%%%%%%%%%%%%

\newcommand{\aff}  {{\rm Aff}_+}                 % Affine aut(+ dilation) 
\newcommand{\Conf}{{\rm Config}}                 %Configuration space
\newcommand{\conf}{{\rm config}}                 %Configuration space
 
\newcommand{\PSL} {\Pj\Sl_2(\R)}                           %PSL_2(R)
\newcommand{\PGL} {\Pj\Gl_2(\R)}                           %PGL_2(R)
\newcommand{\PGLC} {\Pj\Gl_2(\C)}                          %PGL_2(C)
\newcommand{\RP} {\R\Pj^1}                                 %RP^1
\newcommand{\CP} {\C\Pj^1}                                 %CP^1
\newcommand{\C} {{\mathbb C}}                              %complex
\newcommand{\R} {{\mathbb R}}                              %reals
\newcommand{\Z} {{\mathbb Z}}                              %integers
\newcommand{\Pj} {{\mathbb P}}                             %projective space
\newcommand{\Sg} {{\mathbb S}}                             %symmetric group
\newcommand{\Gl} {{\rm Gl}}                                %g.linear group
\newcommand{\Sl} {{\rm Sl}}                                %s.linear group
\newcommand{\SO} {{\rm SO}}                                %s.ortho group
\newcommand{\orb}{{\rm orb}}
\newcommand{\PGl}{{\rm PGl}} 
\newcommand{\SP}{{\rm SP}}
\newcommand{\Ot}{{\rm O}}
\newcommand{\D}{{\rm D}}
\newcommand{\reg}{{\rm reg}}

\newcommand{\cA}{{\mathcal A}}
\newcommand{\cB}{{\mathcal B}}
\newcommand{\cC}{{\mathcal C}}
\newcommand{\blambda}{{\boldsymbol{\lambda}}}
\newcommand{\bDelta}{{\boldsymbol{\Delta}}}
\newcommand{\bdelta}{{\boldsymbol{\delta}}}
\newcommand{\cQ}{{\mathcal Q}}
\newcommand{\dQ}{{\mathcal Q}^0}                               %[!CHANGED!]
\newcommand{\qQ}{{\widehat{\cQ}}}
\newcommand{\tQ}{\widetilde{\cQ}}
\newcommand{\sQ}{{\sf Q}}

\newcommand{\cell}{{\rm Cell}}
\newcommand{\cells}{{\rm Cells}}
\newcommand{\iso}{{\rm Iso}}
\newcommand{\metmap}{\mathfrak{d}}
\newcommand{\dis} {{\mathcal D}}                                 %discriminant
\newcommand{\diagonal}{\triangle}                                %thick diagonal

\newcommand{\Sp}{\mathcal S}

\newcommand{\sn}[1]{{\rm CSN}_{#1}}
\newcommand{\n}[1]{\mathfrak{S}_{#1}}
\newcommand{\mc}{\mathcal}
\newcommand{\ms}{\mathcal}
\newcommand{\bd}{overlap space }

\newcommand{\oM} [1] {\ensuremath{{\mathcal M}_{0,#1}(\R)}}               %open real moduli
\newcommand{\M} [1] {\ensuremath{{\overline{\mathcal M}}{_{0, #1}(\R)}}}  %real moduli
\newcommand{\OM} [1] {\ensuremath{{\overline{\mathcal M}}{_{0, #1}^{{\rm or}}(\R)}}}
\newcommand{\KM} [1] {\ensuremath{{\overline{\mathcal M}}{_{0, #1}^{{\rm kap}}(\R)}}}
\newcommand{\cM} [1] {\ensuremath{{\mathcal M}_{0, #1}}}                  %open complex
\newcommand{\CM} [1] {\ensuremath{{\overline{\mathcal M}}_{0, #1}}}       %compact complex

\newcommand{\sM} [1] {\ensuremath{{\sf M}_{0,#1}(\R)}}
\newcommand{\tsM} [1]{\ensuremath{{\widetilde{\sf M}}{_{0, #1}(\R)}}}
\newcommand{\tsQ} [1] {{\widetilde{\sf Q}}}         %new (cf \S 8)

\newcommand{\iDelta}{{\overset{o}\Delta}}

\newcommand{\bT}{{\overline{\mathcal T}}}
\newcommand{\T}[1]{{\mathcal T}_#1}                                         % Tree space
\newcommand{\pT}[1]{{\bf T}_{#1}(\R)}             
\newcommand{\BHV}[1]{{\rm BHV}_{#1}}                    % BHV space of trees
\newcommand{\iBHV}[1]{{\rm BHV}_{#1}^+}                 % BHV space of trees [!CHANGED!]

\newcommand{\hide}[1]{}

\newcommand{\suchthat} {\:\: | \:\:}
\newcommand{\ore} {\ \ {\it or} \ \ }
\newcommand{\oand} {\ \ {\it and} \ \ }

%%%%%%%%%%%%%%%%%%%%%%%%%%%%%%%%%%%%%%%%%%%%%%%%%%%%%%%%%%%%%%%%%%%%%%%%%%%%%%%%%%%%%%%%%
%
%  paper formatting
%
%%%%%%%%%%%%%%%%%%%%%%%%%%%%%%%%%%%%%%%%%%%%%%%%%%%%%%%%%%%%%%%%%%%%%%%%%%%%%%%%%%%%%%%%%

\theoremstyle{plain}
\newtheorem{thm}{Theorem}
\newtheorem{prop}[thm]{Proposition}
\newtheorem{cor}[thm]{Corollary}
\newtheorem{lem}[thm]{Lemma}
\newtheorem{conj}[thm]{Conjecture}
\newtheorem*{thmhull}{Theorem 11}
\theoremstyle{definition}
\newtheorem*{defn}{Definition}
\newtheorem*{exmp}{Example}
\newtheorem*{claim}{Claim}

\theoremstyle{remark}
\newtheorem*{rem}{Remark}
\newtheorem*{hnote}{Historical Note}
\newtheorem*{nota}{Notation}
\newtheorem*{ack}{Acknowledgments}
\numberwithin{equation}{section}

%%%%%%%%%%%%%%%%%%%%%%%%%%%%%%%%%%%%%%%%%%%%%%%%%%%%%%%%%%%%%%%%%%%%%%%%%%%%%%%%%%%%%%%%%

\title{A space of phylogenetic networks}

\author{Satyan L.\ Devadoss}
\address{S.\ Devadoss: University of San Diego, San Diego, CA 92110}
\email{devadoss@sandiego.edu}

\author{Samantha Petti}
\address{S.\ Petti: Georgia Tech, Atlanta, GA 30332}
\email{spetti@gatech.edu}

\begin{abstract}
A classic problem in computational biology is constructing a phylogenetic tree given a set of distances between $n$ species.  In most cases, a tree structure is too constraining.  We consider a circular split network, a generalization of a tree in which multiple parallel edges signify divergence. A geometric space of such networks is introduced, forming a natural extension of the work by Billera, Holmes, and Vogtmann on tree space.  We explore properties of this space, and show a natural embedding of the compactification of the real moduli space of curves within it.
\end{abstract}

\subjclass[2010]{52B11, 14H10, 92B10, 05E45}

\keywords{phylogenetics, split networks, associahedron, moduli space}

\maketitle

\baselineskip=17pt

%%%%%%%%%%%%%%%%%%%%%%%%%%%%%%%%%%%%%%%%%%%%%%%%%%%%%%%%%%%%%%%%%%%%%%%%%%%%%%%%%%%%%%%%%
%                                                      
%                Introduction
%
%%%%%%%%%%%%%%%%%%%%%%%%%%%%%%%%%%%%%%%%%%%%%%%%%%%%%%%%%%%%%%%%%%%%%%%%%%%%%%%%%%%%%%%%%
\section{Introduction} 
\label{s:intro}

A classical problem in computational biology is the construction of a phylogenetic tree from a sequence alignment of $n$ species. The main tool used to build a phylogenetic tree from this data involves computing the maximum likelihood estimate (MLE) for each of the $(2n-5)!!$ possible trees with $n$ leaves, where each leaf corresponds to a given species.  Such a procedure is quite difficult and requires examining an exponential number of trees.  One method to circumvent this problem is distance based:  One can construct a distance between two species (such as Hamming distance) which records the proportion of characters where the two species differ (based on certain genetic characteristics).  Such a record can be encoded by an $n \times n$ real symmetric, nonnegative matrix called a \emph{dissimilarity matrix}.  The phylogenetic problem is then to reconstruct a weighted tree (on edges) that represents this matrix.

In order to analyze tree-like data, it is necessary not just to understand individual tree structures, but also the relationships between them.  Billera, Holmes, and Vogtmann laid the foundation for this process by constructing a space $\BHV{n}$ of such metric trees with nonpositive curvature, making it useful for geometric methods, such as the calculation of geodesics and centroids \cite{owe}.  

Unfortunately, most dissimilarity matrices are not tree metrics, as data from sequence alignments often fails due to hybridization, horizontal gene transfer, recombination, or gene duplication \cite{hrs}. However, under a weaker (Kalmanson) condition, one obtains a \emph{circular split network} instead of a tree as output, a generalization of a tree in which multiple parallel edges signify divergence. In this work, we construct a geometric space $\sn{n}$ for such networks, similar to $\BHV{n}$, and explore properties of this space.  Interestingly, this space appears in the work of Hacking, Keel, and Tevelev \cite[Section 8]{hkt} based on the real algebraic geometry of del Pezzo surfaces and the root system $D_n$.
Devadoss and Morava \cite{dm} constructed a smooth blowup 
$$\ real(\CM{n}) \ \longrightarrow \ trop(\CM{n}) \ \simeq \ \BHV{n} $$
between the real points and the tropicalization \cite{ss} of the compactified moduli space of stable genus zero algebraic curves marked with distinct smooth points.  In this paper, we show an embedding 
$$\ real(\CM{n}) \ \hookrightarrow \ \sn{n} \, ,$$
in Theorem~\ref{t:moduliembed} which endows this real moduli space with an inherent metric.  We claim that $\sn{n}$ seems to be a canonical space of study, with natural embeddings of both $\BHV{n}$ and \M{n} within its structure.

Section~\ref{s:defns} provides foundational definitions of trees, splits, and dissimilarity matrices.  The construction of the space of split networks, along with understanding its properties, is given in Section~\ref{s:space} and \ref{s:props}.  The associahedron polytope and the real moduli space of curves is introduced in Section~\ref{s:moduli}, along with the embedding map provided in Section~\ref{s:embed}.

\begin{ack}
We would like to thank Jack Morava, Lior Pachter, and Jim Stasheff for their encouragement, along with helpful conversations with Sean Keel and Megan Owen.  Devadoss was partially supported by a John Templeton Foundation grant 51894.  We also thank Williams College, where this work was mostly completed during summer 2015.
\end{ack} 

%%%%%%%%%%%%%%%%%%%%%%%%%%%%%%%%%%%%%%%%%%%%%%%%%%%%%%%%%%%%%%%%%%%%%%%%%%%%%%%%%%%%%%%%%
%
%                Splits, Trees, and Networks
%
%%%%%%%%%%%%%%%%%%%%%%%%%%%%%%%%%%%%%%%%%%%%%%%%%%%%%%%%%%%%%%%%%%%%%%%%%%%%%%%%%%%%%%%%%
\section{Splits, Trees, and Networks} 
\label{s:defns}
\subsection{}

We begin with some foundational definitions; for a more thorough introduction to phylogenetic trees and networks, we refer the reader to \cite{hrs, lp, sest}.  Throughout the paper, $X$ denotes the finite set of $n$ distinct species on which metrics are defined.

\begin{defn} 
A \emph{split} $S=\{A,B\}$ is a partition of $X$ into two nonempty sets.  A split is \textit{trivial} if one set of the partition has cardinality one.  A set of splits is called a \textit{split system}. 
\end{defn}

\begin{defn} 
A split system $\Sp$ is \emph{pairwise compatible} if for every pair of splits $S_1=\{A_1, B_1\}$ and $S_2=\{A_2,B_2\}$ in $\Sp$, at least one of the following is empty:
$$A_1 \cap A_2, \ \ A_1 \cap B_2, \ \ A_2 \cap B_1, \ \ B_1 \cap B_2.$$
\end{defn}

\noindent
There is a canonical graph associated with a split system, called the \emph{Buneman graph} \cite{sest}.  When the split system is pairwise compatible, this graph is a tree, where the vertices are the species $X$, and the edges are the splits.  Figure~\ref{f:split} displays corresponding tree for each set of pairwise compatible splits, where each edge visually represents a split.  Notice that the order in which  splits are introduced is independent of the associated Buneman tree.

\begin{figure}[h]
\includegraphics{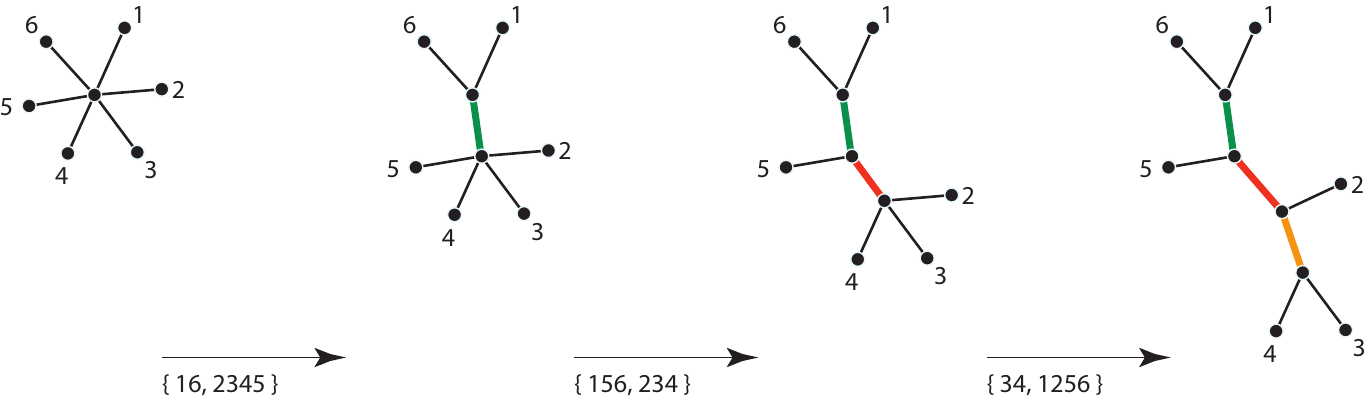}
\caption{Pairwise compatible splits and the associated Buneman trees.}
\label{f:split}
\end{figure}

Thus far, we have been considering combinatorial and topological properties. However, biological data often comes in the form of distances between the species of $X$ rather than in terms of splits, which can be encoded by an $n \times n$ real symmetric, nonnegative map 
$$\delta : X \times X \longrightarrow \R$$ 
called a \emph{dissimilarity matrix}.  The phylogenetic problem is then to reconstruct an edge-weighted tree where $\delta(i,j)$ is the additive distance between species $i$ and $j$, provided such a tree exists.  The following gives necessary and sufficient conditions for such an existence.

\begin{thm} \cite[Chapter 7]{sest} \label{t:4pt}
The dissimilarity matrix $\delta$ realizes a weighted tree if and only if it satisfies the \emph{four-point condition}: if for every four elements $i,j,k,l \in X$, two of the three terms below are greater than or equal to the third:
$$\delta(i,j) +\delta(k,l), \ \ \ \delta(i,l) +\delta(j,k), \ \ \ \delta(i,k) +\delta(j,l)\,.$$
\end{thm}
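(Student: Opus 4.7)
The plan is to prove both directions, with necessity handled by a direct path computation and sufficiency handled by induction on $n = |X|$.

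For necessity, I would assume $\delta$ is the path metric on a weighted tree $T$ with leaves indexed by $X$ and verify the four-point condition at an arbitrary quadruple $i,j,k,l$. The minimal subtree $T'$ of $T$ spanning these four leaves (with degree-two vertices contracted) is itself a tree on four leaves, so either $T'$ is a star or it has a unique internal edge of some weight $w \geq 0$ realizing a split, say $\{i,j\} \mid \{k,l\}$. In the latter case, summing weights along paths gives
\[
\delta(i,k) + \delta(j,l) \;=\; \delta(i,l) + \delta(j,k) \;=\; \delta(i,j) + \delta(k,l) + 2w,
\]
while in the star case all three sums coincide. Either way, the maximum of the three sums is attained at least twice, which is precisely the four-point condition.

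For sufficiency I would induct on $n$. The base cases $n \leq 3$ are immediate, since any symmetric nonnegative $\delta$ on at most three points is realized by a star. For the inductive step, the first task is to identify from $\delta$ alone a pair $\{i,j\} \subset X$ that must form a \emph{cherry} (a sibling pair sharing a common neighbor) in any realizing tree; this can be done via Gromov products $(x|y)_z := \tfrac{1}{2}\bigl(\delta(x,z)+\delta(y,z)-\delta(x,y)\bigr)$ together with the four-point condition, e.g.\ by fixing a reference leaf $r \in X$ and choosing $\{i,j\}$ to maximize $(i|j)_r$. I would then remove $j$, note that the four-point condition descends to the restriction of $\delta$ to $X\setminus\{j\}$, and invoke the inductive hypothesis to obtain a weighted tree $T'$ realizing this restriction.

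The main obstacle is the reattachment step: one must pinpoint a specific location on $T'$ (subdividing an edge of $T'$ if necessary) and assign a specific pendant length $\ell_j$ so that the resulting tree $T$ satisfies $d_T(j,k) = \delta(j,k)$ for \emph{every} $k \in X\setminus\{j\}$, not merely for $k = i$. The candidate values of $\ell_j$ and of the attachment point are forced by Gromov-product formulas evaluated against auxiliary witnesses $k \in X \setminus \{i,j\}$; the crux is that the four-point condition, applied to quadruples $\{i,j,k,\ell\}$ as $k,\ell$ vary, guarantees these values are independent of the chosen witnesses, so the reattachment is well-defined. Once consistency is secured, checking $d_T = \delta$ on all new pairs reduces to unwinding definitions, completing the induction.
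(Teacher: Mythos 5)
The paper does not prove this statement: it is quoted verbatim from Semple and Steel's \emph{Phylogenetics} (the classical tree-metric theorem going back to Buneman and Zaretskii) and used as a black box, so there is no in-paper argument to compare yours against. Your outline does follow the standard textbook route: a local four-leaf computation for necessity, and cherry-picking plus induction for sufficiency. The necessity half is complete and correct --- the identity $\delta(i,k)+\delta(j,l)=\delta(i,l)+\delta(j,k)=\delta(i,j)+\delta(k,l)+2w$ for the quartet with internal edge weight $w\ge 0$ is exactly the right computation, and the star case is handled.

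The sufficiency half, however, is an outline rather than a proof: the two places you label as ``can be done'' and ``the crux'' are precisely where all the content of the theorem lives, and neither is discharged. Concretely: (i) you must prove, from the four-point condition alone, that the pair $\{i,j\}$ maximizing the Gromov product $(i|j)_r$ behaves like a cherry, i.e.\ that $(i|k)_r\le (i|j)_r$ and the resulting attachment data are consistent --- this is a genuine lemma, not a remark (the tree-side intuition that the farthest median forces a cherry does not transfer for free, since at this stage no tree exists yet); and (ii) the witness-independence of the attachment point and pendant length $\ell_j$ is exactly the inductive heart of the argument and needs the explicit case analysis of where the quadruple $\{i,j,k,\ell\}$ resolves. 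There is also a small but real issue in your base case: a symmetric nonnegative $\delta$ on three points is realized by a star with \emph{nonnegative} pendant lengths only if $\delta$ satisfies the triangle inequality, which the four-point condition on \emph{distinct} quadruples does not supply; you either need to allow repeated elements in the condition (the usual convention) or add the triangle inequality as a hypothesis. As a roadmap your proposal is the right one, but as written the forward implication is asserted rather than proved.
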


Due to this result, a matrix satisfying the four-point condition is called a \emph{tree metric}.
The \emph{neighbor-joining} algorithm \cite{sn} explicitly constructs the weighted tree from a dissimilarity matrix satisfying the four-point condition, viewed as a map between such matrices to pairwise compatible split systems.

%%%%%%%%%%%%%%%%%%%%%%%%%%%%%%%%%%%%%%%%%%%%%%%%%%%%%%%%%%%%%%%%%%%%%%%%%%%%%%%%%%%%%%%%%
\subsection{}

Most data from sequence alignments often fails to satisfy the four-point condition due to hybridization, horizontal gene transfer, recombination, or gene duplication.  One can relax conditions to obtain a \emph{split network}, a generalization of a tree where each split is represented by a set of parallel edges that disconnects the graph according to the partition.  Figure~\ref{f:network} displays an example of the construction of a split network, viewed as ``pulling" each additional split in a different direction (and shown using distinct colors).  Geometrically, the lengths of each parallel set of edges are identical.

\begin{figure}[h]
\includegraphics{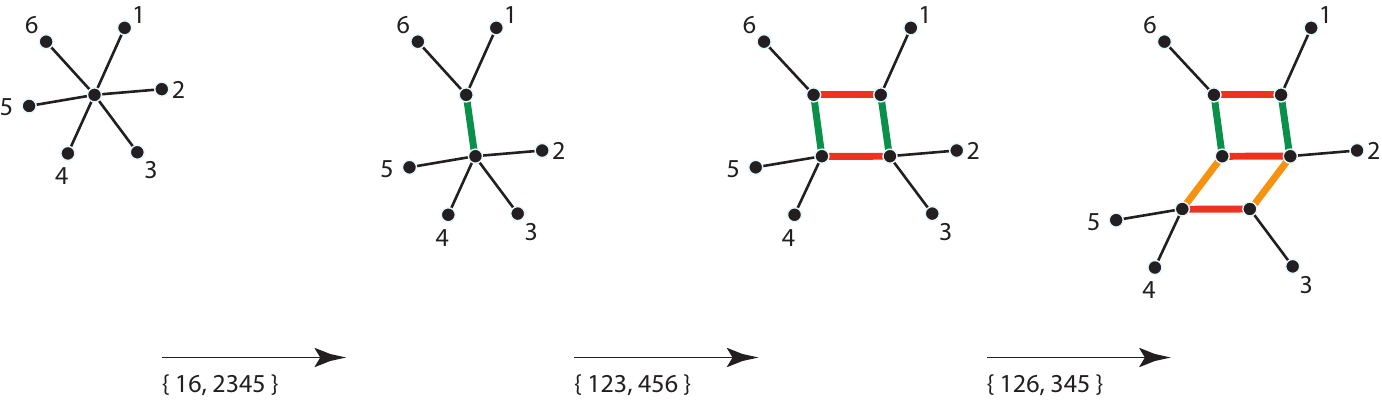}
\caption{Splits and the associated split network.}
\label{f:network}
\end{figure}

The \emph{convex hull} algorithm can be used to draw any split system. It works by increasing the dimensionality to add splits to a growing split network \cite{hrs}.  In general, for a system with $k$ splits, the visualization of the network leads to the 1-skeleton of the $k$-cube.  

Although any split system can be represented as a split network, not all are planar in nature.  For instance, adding the split $\{14, 2356\}$ to the network of Figure~\ref{f:network} creates a network in 3D, the additional dimension separating $\{14\}$ from $\{2356\}$.  To preserve planarity, the crucial concept needed is that of a \emph{circular ordering} for the species set X.

\begin{defn} 
A split system $\Sp$ is \emph{circular} with respect to some cyclic ordering $\pi = \{x_1, \dots x_n\}$ of $X$ if every split of $\Sp$ is of the form $\big\{\{x_{i+1}, \dots, x_j\}, \{x_{j+1}, \dots, x_i\}\big\}$, where $x_{n+1}=x_1$.
\end{defn}

The following establishes necessary and sufficient conditions for determining when a dissimilarity map can be realized as an edge-weighted circular split network, analogous to the four-point condition for trees of Theorem~\ref{t:4pt} above.

\begin{thm} \cite{bd} \label{t:kalmanson}
The dissimilarity matrix $\delta$ realizes a weighted circular split system if and only if it satisfies the \emph{Kalmanson condition} with respect to some circular ordering $\pi$:  if for every $i<j<k<l$ in the circular ordering $\pi$, both inequalities hold:  
$$ \delta(x_i,x_j) +\delta(x_k,x_l) \leq \delta(x_i,x_k) + \delta(x_j,x_l)$$ 
$$ \delta(x_i,x_l) +\delta(x_j,x_k) \leq \delta(x_i,x_k) + \delta(x_j,x_l).$$ 
\end{thm}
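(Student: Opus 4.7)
The plan is to prove the two directions separately, with the forward direction a finite case analysis and the converse an explicit reconstruction of weights.

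For the forward implication, suppose $\delta = \sum_S w_S \delta_S$ is a non-negative combination of circular split pseudo-metrics, where $\delta_S(x,y) = 1$ if $S$ separates $x,y$ and $0$ otherwise. Fix indices $i<j<k<l$ in the cyclic ordering $\pi$. The key observation is that any circular split, restricted to the four points $\{x_i, x_j, x_k, x_l\}$, induces a partition into two consecutive sub-arcs of the induced cyclic order; in particular the ``diagonal'' partition $\{x_i, x_k\} \, | \, \{x_j, x_l\}$ is impossible. A finite case analysis over the remaining seven possibilities (the trivial split, four singleton-type splits, and the two $2{+}2$ splits $\{x_i, x_j\} \, | \, \{x_k, x_l\}$ and $\{x_j, x_k\} \, | \, \{x_l, x_i\}$) shows that each individual circular split $S$ contributes no more to $\delta(x_i,x_j)+\delta(x_k,x_l)$ (respectively $\delta(x_i,x_l)+\delta(x_j,x_k)$) than to $\delta(x_i,x_k)+\delta(x_j,x_l)$. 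Summing with the non-negative weights $w_S$ yields both Kalmanson inequalities.

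For the converse, given a Kalmanson dissimilarity $\delta$ with cyclic order $\pi$, I would reconstruct the split weights explicitly. For each non-trivial circular split $S_{a,b} = \bigl\{\{x_a, \ldots, x_b\},\, \{x_{b+1}, \ldots, x_{a-1}\}\bigr\}$ (indices cyclic, both parts of size at least two), define
\[
w_{a,b} \;=\; \tfrac{1}{2}\bigl(\delta(x_{a-1}, x_b) + \delta(x_a, x_{b+1}) - \delta(x_{a-1}, x_{b+1}) - \delta(x_a, x_b)\bigr),
\]
with weights of trivial splits handled by suitable three-point formulas reading off pendant-edge lengths. The second Kalmanson inequality applied to the cyclically ordered quadruple $(a{-}1,\, a,\, b,\, b{+}1)$ is precisely the statement $w_{a,b} \ge 0$, so every reconstructed non-trivial weight is non-negative.

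The remaining step, and the main technical obstacle, is verifying that $\delta(x_p, x_q) = \sum_S w_S\, \delta_S(x_p, x_q)$ for every pair of species. Expanding each $w_{a,b}$ reduces this to a combinatorial cancellation: the $\delta$-entries, indexed by the four corners of each split's ``boundary square,'' telescope when summed over all splits separating a fixed pair, leaving only $\delta(x_p,x_q)$. The bookkeeping is tight because the number of distinct circular splits (trivial plus non-trivial) equals $\binom{n}{2}$, matching the dimension of the space of dissimilarity matrices on $X$, so the linear map from weights to dissimilarities is a bijection on the relevant cones. Alternatively one can argue by induction on $n$: peel off one split weight at a pendant (e.g.\ the trivial split $\{x_n\}$), subtract the corresponding split metric, and verify that the residue remains Kalmanson on the restricted cyclic order so that the inductive hypothesis applies. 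Either route is essentially combinatorial; the conceptual content of the theorem is entirely captured by the two observations that circular splits cannot induce a diagonal partition on four cyclically ordered points and that the Kalmanson difference on adjacent quadruples computes split weights.
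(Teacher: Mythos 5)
The paper does not prove this statement; it is quoted from Bandelt--Dress \cite{bd} and used as a black box, so there is no in-paper argument to compare yours against. Judged on its own terms, your forward direction is complete and correct: a circular split restricted to four cyclically ordered points can never induce the diagonal partition $\{x_i,x_k\}\mid\{x_j,x_l\}$, and the finite case check over the remaining induced partitions, summed against non-negative weights, gives both inequalities. Your weight formula in the converse is also the right one: expanding $\delta=\sum_S w_S\delta_S$ on the quadruple $(x_{a-1},x_a,x_b,x_{b+1})$ shows every circular split except $S_{a,b}$ contributes zero to the alternating sum and $S_{a,b}$ contributes exactly $2w_{a,b}$, and non-negativity of $w_{a,b}$ is precisely the second Kalmanson inequality on that quadruple.

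The gap is that the essential content of the converse --- that these weights actually satisfy $\delta(x_p,x_q)=\sum_S w_S\,\delta_S(x_p,x_q)$ --- is asserted rather than proved, and the dimension count you offer does not close it. Knowing that the number of circular splits equals $\binom{n}{2}$, the dimension of dissimilarity space, only helps if the split pseudometrics $\{\delta_S\}$ are linearly independent; your phrase ``the linear map from weights to dissimilarities is a bijection'' is exactly the statement in need of proof, so as written the justification is circular. The telescoping identity is true and provable (sum $w_{a,b}$ over all arcs containing $x_p$ but not $x_q$, expand each term via your formula, and telescope in $a$ and $b$ separately), but it is the heart of the theorem and must be carried out, not waved at. A second, smaller issue: the Kalmanson condition constrains only quadruples of \emph{distinct} points, so it cannot force the trivial-split weights $\tfrac12\bigl(\delta(x_{a-1},x_a)+\delta(x_a,x_{a+1})-\delta(x_{a-1},x_{a+1})\bigr)$ to be non-negative; for a general dissimilarity with no triangle inequality these can be negative, so either a triangle-inequality hypothesis or an explicit convention on trivial splits is needed, and your remark about ``suitable three-point formulas'' leaves this unresolved.
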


Due to this result, a matrix satisfying the Kalmanson condition is called a \emph{circular decomposable  metric}.  Such metrics can be realized by a circular split network in which the shortest path between any two leaves contains precisely one edge from each split that separates the two leaves. Thus, the length of this path is the distance prescribed by the metric.  Given a dissimilarity matrix satisfying the Kalmanson condition, the \emph{neighbor-net} algorithm \cite{bm} produces a circular ordering and a weighted split network compatible with that ordering.

%%%%%%%%%%%%%%%%%%%%%%%%%%%%%%%%%%%%%%%%%%%%%%%%%%%%%%%%%%%%%%%%%%%%%%%%%%%%%%%%%%%%%%%%%
\subsection{}

Unlike trees, the visualization of circular split networks is not straightforward \cite{gh}. The \emph{circular network} algorithm creates a planar split network, beginning with a star graph, and at each iteration including an additional split.  Unfortunately, the structure of resultant split network depends on the order in which these splits are added, as shown by Figure~\ref{f:order}.
The top row of the figure shows the star graph with three splits $\{123, 456\}$, $\{126, 345\}$, $\{156, 234\}$ imposed on it; the bottom row shows the same three splits applied in an alternate ordering, resulting in a different diagram.

\begin{figure}[h]
\includegraphics{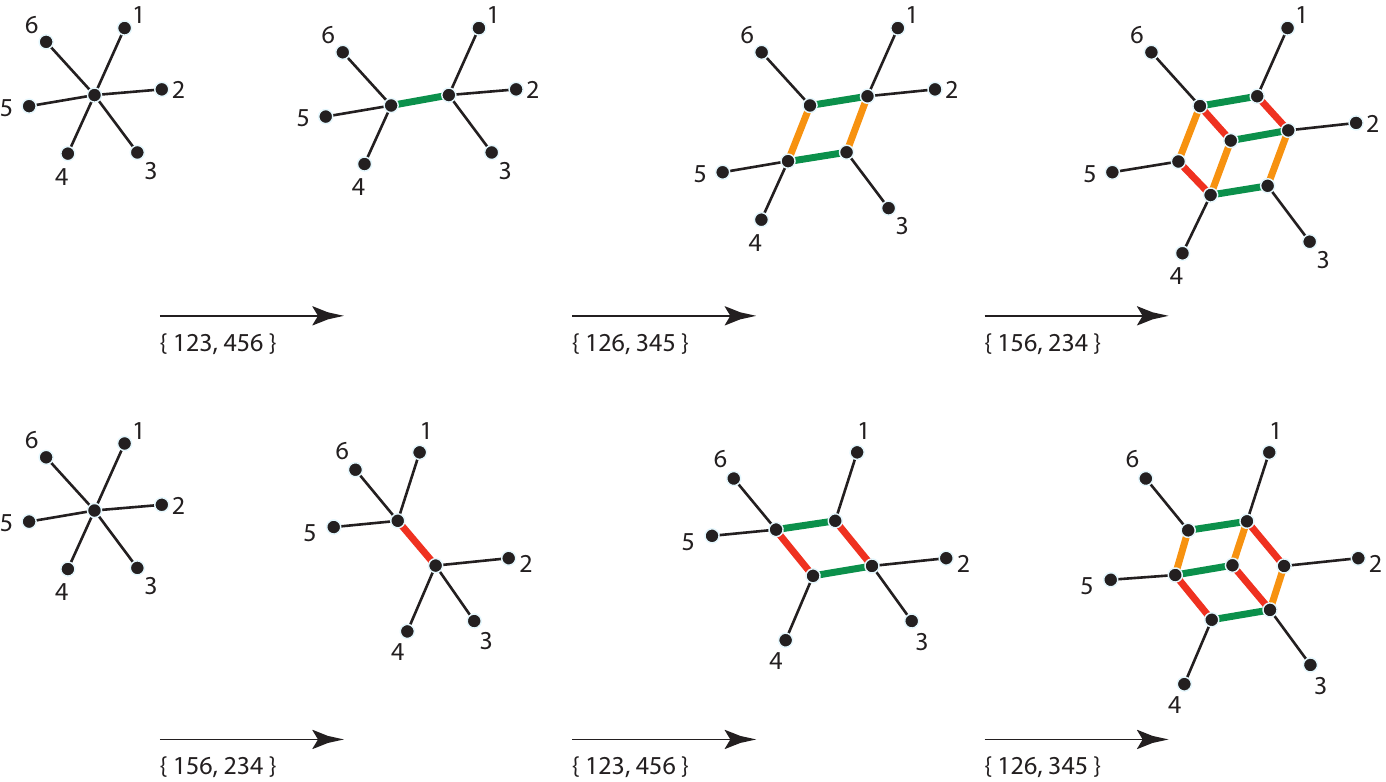}
\caption{The circular split network is not commutative with respect to the order of the splits.}
\label{f:order}
\end{figure}

In order to avoid issues with commutativity of split order, we use the  \emph{dual polygonal representation} for the duration of the paper. Given a circular split system with some circular ordering $\pi$ of the species, consider a regular $n$-gon, with the edges cyclically labeled with $\pi$.  For each split, draw a diagonal partitioning the appropriate edges. 
Figure \ref{f:polyversion} shows the relationship between the traditionally drawn split network and its corresponding dual polygonal representations.
For weighted networks, lengths of (parallel) edges are now represented by weights on the diagonals.
Moreover, diagonals representing tree-like edges are \emph{noncrossing} and network-like edges are \emph{crossing}.

\begin{figure}[h]
\includegraphics{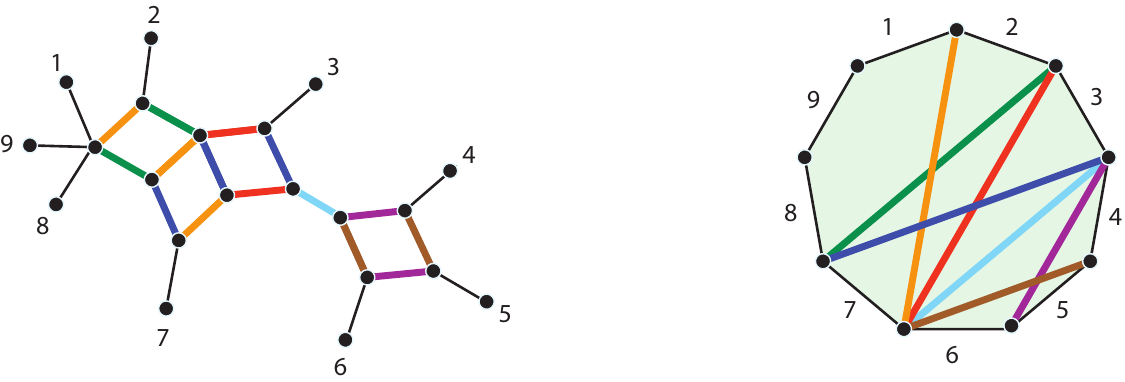}
\caption{The dual polygon representations of a circular split network.}
\label{f:polyversion}
\end{figure}

%%%%%%%%%%%%%%%%%%%%%%%%%%%%%%%%%%%%%%%%%%%%%%%%%%%%%%%%%%%%%%%%%%%%%%%%%%%%%%%%%%%%%%%%%
\subsection{}

A labeled polygon with diagonals gives rise to a circular split system, where each diagonal is a split of the system.  Conversely, a circular split system may be compatible with several different circular orderings, and there is a unique polygonal representation for each such ordering.  A simple geometric operation allows the discovery of all such compatible orderings. 

\begin{defn}
A \emph{twist} along a noncrossing diagonal $d$ of a labeled polygon $P$ is obtained by `breaking' $P$ along $d$ into two subpolygons, 
`reflecting' one of these pieces, and `gluing' them back (Figure~\ref{f:dtwist}).  
\end{defn}

\begin{figure} [h]
\includegraphics[width=\textwidth]{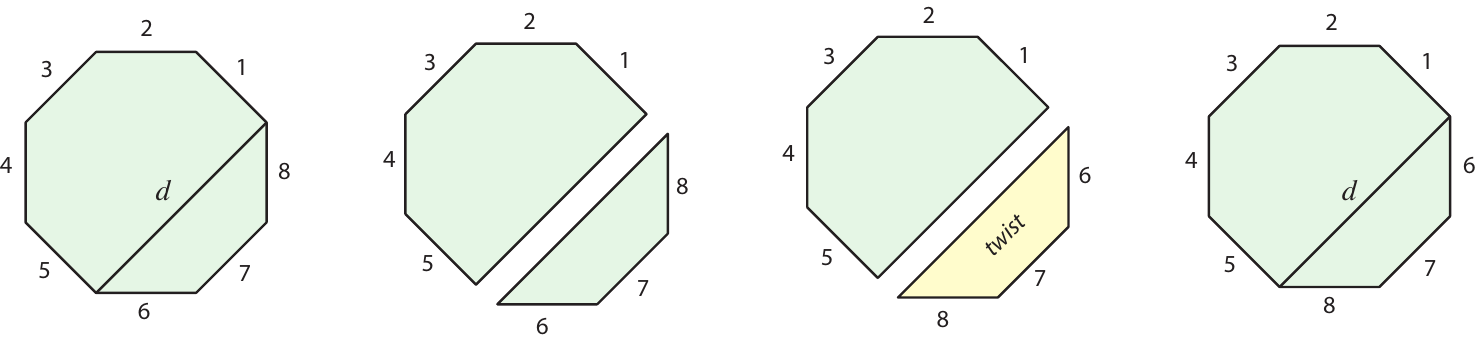}
\caption{Twisting along diagonal $d$.}
\label{f:dtwist}
\end{figure}

\begin{thm} \label{t:twist} 
Two distinct circular orderings $\pi_1$ and $\pi_2$ are compatible with a circular split system $\Sp$ if and only if beginning with the polygonal representation of $\Sp$ with $\pi_1$ there exist a sequence of twists along noncrossing diagonals that results in the polygonal representation of $\Sp$ with $\pi_2$.
\end{thm}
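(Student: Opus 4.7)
The plan is to handle the two directions separately: $(\Leftarrow)$ is a short invariance check, while $(\Rightarrow)$ proceeds by induction on $n = |X|$, using the subdivision of the polygon induced by the tree-like splits of $\Sp$.

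For $(\Leftarrow)$, I would verify that a single twist along a noncrossing diagonal $d$ preserves the split system $\Sp$. By hypothesis $d$ crosses no diagonal of $\Sp$, so each $S \in \Sp$ is drawn as a diagonal lying entirely inside one of the two sub-polygons $P_A, P_B$ bounded by $d$. Reflecting, say, $P_A$ cyclically reverses the labels on its boundary edges, but each internal diagonal still sits between the same pair of boundary labels, and so represents the same bipartition of $X$; the diagonals in $P_B$ and the diagonal $d$ itself are untouched. Hence the new polygonal representation realizes exactly $\Sp$, and by iterating, the ordering $\pi_2$ obtained from $\pi_1$ by any sequence of such twists is compatible with $\Sp$.

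For $(\Rightarrow)$, let $\Sp^0 \subseteq \Sp$ denote the splits that are pairwise compatible with every other split in $\Sp$. This collection is intrinsic to $\Sp$, and in any compatible ordering it is drawn as a noncrossing diagonal collection subdividing the $n$-gon into sub-polygons $Q_1, \ldots, Q_k$. The combinatorial type of this subdivision --- which leaves of $X$ lie on the boundary of which $Q_j$, and how the remaining splits of $\Sp \setminus \Sp^0$ are distributed among the $Q_j$ --- is determined by $\Sp^0$ alone, so it agrees for both $\pi_1$ and $\pi_2$. Applying the inductive hypothesis within each $Q_j$, which carries a strictly smaller circular split system on its boundary once each bounding diagonal of $\Sp^0$ is collapsed to a single ``virtual leaf,'' yields a sequence of internal twists taking $\pi_1|_{Q_j}$ to $\pi_2|_{Q_j}$. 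Each such twist is admissible for the full polygon, since its diagonal lies in $Q_j$ and therefore crosses no $\Sp$-diagonal outside $Q_j$. A final twist along each diagonal of $\Sp^0$, when needed, corrects the reflective orientation of the adjacent sub-polygons.

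The main obstacle is the base case when $\Sp^0 = \emptyset$, i.e.\ every split of $\Sp$ crosses another and the polygon is not subdivided at all. Here I would exploit the auxiliary noncrossing diagonals --- diagonals of the polygon which do not lie in $\Sp$ but cross no diagonal of $\Sp$ --- and argue that twists along these realize enough adjacent transpositions of the cyclic sequence to connect any two compatible orderings. The key claim, which I expect will require careful case analysis, is that for any pair of leaves consecutive in $\pi_1$ whose relative positions must change to reach $\pi_2$, the short ``corner'' diagonal skipping over them is admissible by virtue of the $\pi_1$-compatibility of $\Sp$; combining these local swaps along a shortest path from $\pi_1$ to $\pi_2$ in the graph of cyclic orderings then assembles the required twist sequence.
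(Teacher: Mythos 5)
Your backward direction is fine and matches the paper's (the paper dismisses it in one sentence; your check that each split's bipartition is preserved under reflection of one sub-polygon is the right verification). The forward direction, however, has a genuine gap, and it sits exactly where you flagged it: the base case $\Sp^0=\emptyset$. The ``key claim'' you propose there --- that whenever two leaves consecutive in $\pi_1$ must change relative position, the corner diagonal cutting off that pair is admissible --- is false. Take $n=8$, $\pi_1=(1,2,3,4,5,6,7,8)$, and
$$\Sp \ = \ \bigl\{\,\{12,345678\},\ \{23,456781\},\ \{56,781234\},\ \{67,812345\}\,\bigr\}.$$
Each split crosses another ($\{12\}$ crosses $\{23\}$, and $\{56\}$ crosses $\{67\}$), so $\Sp^0=\emptyset$ and your induction does not reduce the problem. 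The ordering $\pi_2=(1,2,3,4,7,6,5,8)$ is also compatible with $\Sp$, yet \emph{no} corner diagonal is admissible from $\pi_1$: the corner diagonals for $\{1,2\},\{2,3\},\{5,6\},\{6,7\}$ are themselves splits of $\Sp$ crossed by other splits, and the corner diagonals for $\{3,4\},\{4,5\},\{7,8\},\{8,1\}$ each cross one of the four splits. The only usable twists are along \emph{long} noncrossing diagonals such as the one cutting off $\{5,6,7\}$ (which in one step reverses that block and produces $\pi_2$). So adjacent transpositions cannot ``assemble'' the required sequence, and the base case needs an entirely different mechanism --- precisely the part of the argument you left as a claim requiring ``careful case analysis.''

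For comparison, the paper does not induct on a tree-like decomposition at all. It gives a direct greedy algorithm: reading off the target adjacency $f(x)$ of each leaf $x$ in $\pi_2$, it draws at each step a (possibly long) diagonal joining the appropriate endpoints of $x$ and $f(x)$ and twists one side, arguing that the compatibility of $\pi_2$ (not $\pi_1$) with $\Sp$ forces that diagonal to be noncrossing; after $n-2$ such steps the adjacencies of $\pi_2$ are all realized. That argument builds the long twists your counterexample shows are unavoidable. If you want to salvage your structural approach, you would need (i) a base-case argument that works with arbitrary noncrossing diagonals rather than corner diagonals, and (ii) a more careful treatment of the orientation-correction step in the inductive case, since a twist along a diagonal of $\Sp^0$ reflects an entire side of the polygon (possibly several sub-polygons $Q_j$ at once), not just the one adjacent sub-polygon whose orientation you intend to fix.
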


\begin{rem}
We allow two types of noncrossing diagonals: (i) diagonal representing a split in $\Sp$ that is noncrossing with other diagonals in the polygonal representation, or (ii) diagonal not representing a split in $\Sp$, but if drawn, would be noncrossing with other diagonals in the polygonal representation.
\end{rem}

\begin{proof}
The backwards direction is immediate: twisting along a noncrossing diagonal does not change any split partitions.   For the forward direction, begin with a representation of $\Sp$ with $\pi_1$.  Let $f(x)$ be the label of the edge adjacent to $x$ in the clockwise direction in $\pi_2$. %We begin the construction as follows, for $x=1$ and $\pi= \pi_1$.
\begin{enumerate}
\item If $f(x)$ is adjacent to $x$ in $\pi_1$ in the clockwise direction, do nothing.
\item If $f(x)$ is adjacent to $x$ in $\pi_1$ in the counterclockwise direction, and no diagonals exist between the two labels, draw a diagonal containing $x$ and $f(x)$, and twist the side containing $x$ and $f(x)$. 
\item If $f(x)$ is adjacent to $x$ in $\pi_1$ in the counterclockwise direction, and $d$ is a diagonal landing between $x$ and $f(x)$, twist along both sides of $d$. 
\item Otherwise, draw diagonal $d$ connecting the right endpoint of $x$ with the right endpoint of $f(x)$, and twist the side containing $f(x)$. The compatibility of $\pi_2$ with $\Sp$ guarantees $d$ to be noncrossing. 
\end{enumerate}
Repeating this process with the resulting circular ordering, now letting $x$ to be $f(x)$, produces the desired result after $n-2$ iterations.
\end{proof}

\begin{exmp}
The polygonal representation in Figure~\ref{f:twistequiv}(a) is twisted along a noncrossing diagonal of type (ii) in (b) resulting in (c), an equivalent polygonal representation of the same split system.  Similarly, a noncrossing diagonal of type (i) in (c) is twisted in (d) resulting in  (e), another equivalent representation.
\end{exmp}

\begin{figure} [h]
\includegraphics[width=\textwidth]{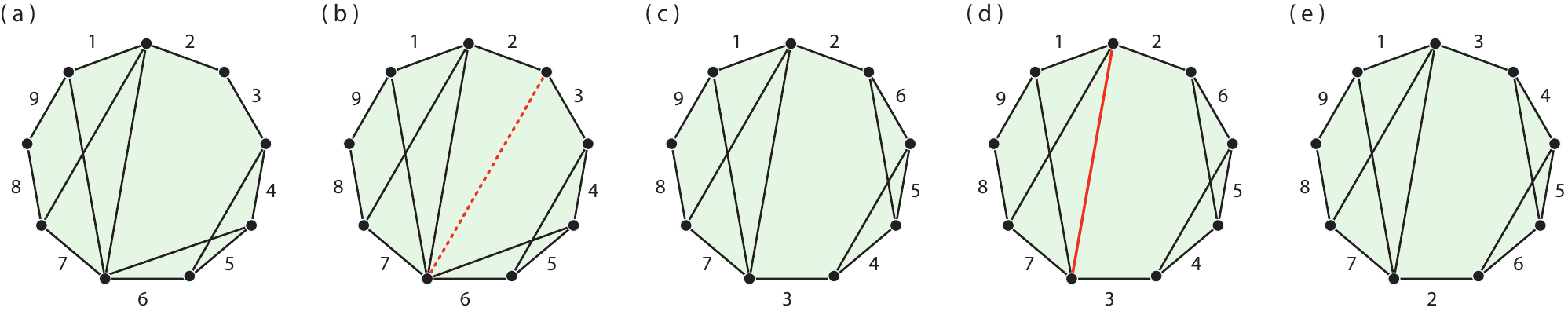}
\caption{Equivalent polygonal representations due to twisting.}
\label{f:twistequiv}
\end{figure}

%%%%%%%%%%%%%%%%%%%%%%%%%%%%%%%%%%%%%%%%%%%%%%%%%%%%%%%%%%%%%%%%%%%%%%%%%%%%%%%%%%%%%%%%%
%                                                      
%                Trees and Networks
%
%%%%%%%%%%%%%%%%%%%%%%%%%%%%%%%%%%%%%%%%%%%%%%%%%%%%%%%%%%%%%%%%%%%%%%%%%%%%%%%%%%%%%%%%%
\section{Trees and Networks}
\label{s:space}
\subsection{}

Billera, Holmes, and Vogtmann \cite{bhv} constructed an elegant space $\BHV{n}$ of isometry classes of metric trees with $n$ labeled leaves.\footnote{Classically, this space is defined in terms of trees with $n$ leaves and one root, whereas we consider unrooted trees with $n$ leaves.  Thus there is an index shift of one.}  Each such tree specifies a point in the orthant $[0, \infty)^{n-3}$, parametrized by the lengths of its internal edges, and thus defines coordinate charts for the space of such trees. The space $\BHV{n}$ is assembled by gluing $(2n-5)!!$ such orthants, one for each different binary tree on $n$ leaves \cite{dh}.  Two orthants of $\BHV{n}$ share a wall if and only if their corresponding binary trees differ by a \emph{rotation}, a move which collapses an interior edge of a binary tree, and then expands the resulting degree-four vertex into a different binary tree.  Figure~\ref{f:bhv4}(a) shows $\BHV{4}$ consisting of three (orthant) rays glued at the origin, the degenerate tree with no internal edges.

\begin{figure}[h]
\includegraphics[width=.9\textwidth]{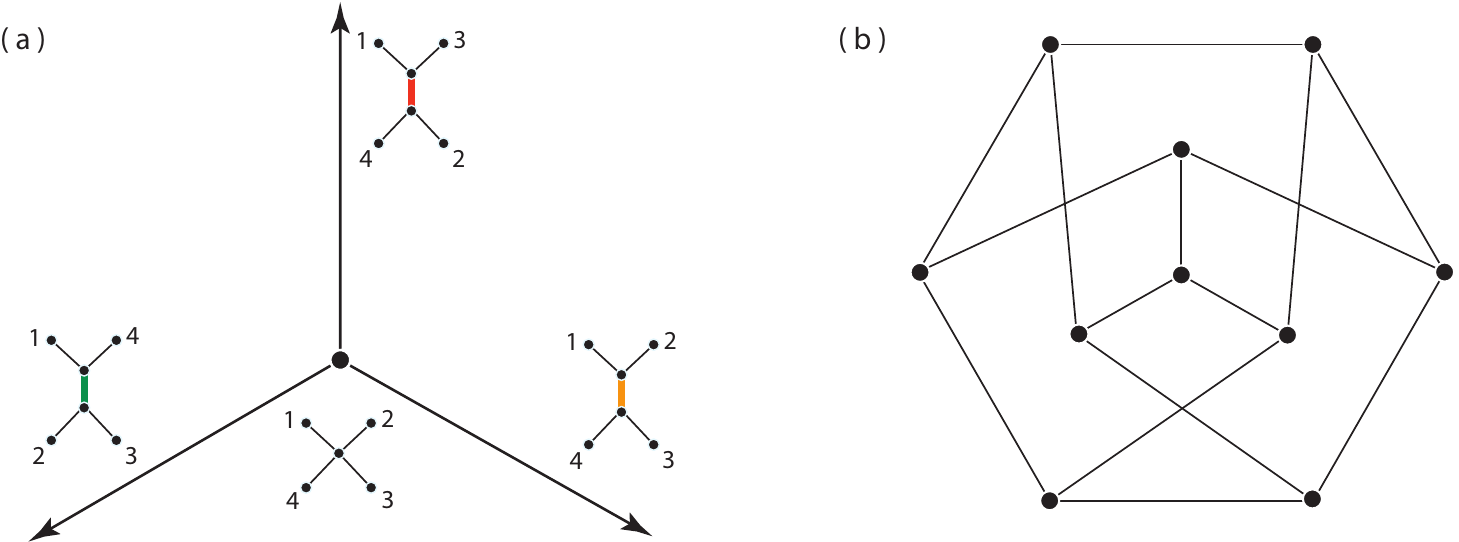}
\caption{(a) Tree space $\BHV{4}$ and (b) the simplicial complex $\T{5}$.}
\label{f:bhv4}
\end{figure}

\begin{defn}
Let $\T{n}$ be the subspace of $\BHV{n}$ consisting of trees with internal edge lengths that sum to 1.   It  is a pure simplicial $(n-3)$-complex composed of $(2n-3)!!$ chambers, with two adjacent chambers differing by a rotation of their underlying trees.  
%In particular, $\T{n}$ has one $(k-1)$-simplex for every tree with $k$ interior edges.
\end{defn}

Indeed, $\BHV{n}$ is a cone over this space (and thus contractible), where the cone-point is the degenerate tree with no internal edges.  For example, $\BHV{5}$ consists of 15 quadrants $[0, \infty)^2$ glued together, and its subspace $\T{5}$ is the Peterson graph with 15 edges, as displayed in Figure~\ref{f:bhv4}(b).  Here, the 10 vertices correspond to binary trees with five leaves and two internal edges.  

%%%%%%%%%%%%%%%%%%%%%%%%%%%%%%%%%%%%%%%%%%%%%%%%%%%%%%%%%%%%%%%%%%%%%%%%%%%%%%%%%%%%%%%%%
\subsection{}

We now introduce a space $\sn{n}$ of isometry classes of metric circular split networks with $n$ labeled leaves.  Each network specifies a point in this network space, parametrized by the weights of its splits, or equivalently the lengths of its (parallel sets of) internal edges. The space is assembled by gluing $(n-1)!/2$ orthants together, each of which corresponds to a unique circular ordering of the $n$ species, up to rotation and reflection. Therefore, each orthant has dimension $n(n-3)/2$, the maximal number of splits compatible with a given circular ordering, or the maximal number of diagonals on an $n$-gon.  Orthants glue together along cells that represent split systems that are compatible to the orderings of their respective chambers, given by Theorem~\ref{t:twist}.

There is a natural coordinate system based on splits into which $\sn{n}$ embeds. Let 
\begin{equation}
\label{e:split}
\delta \ := \ 2^{n-1} - n - 1 \, ,
\end{equation}
enumerating networks with exactly one nontrivial split, obtained by considering half the number of partitions $2^n$ (compensating for double-counting) and subtracting the set of $n$ trivial splits and $1$, the empty set.  Each circular split network is an element of $\R^\delta$, defined by the set of its splits and parametrized by the lengths of its internal edges.  However, a circular split network will have at most $n(n-3)/2$ nonzero values in this coordinate system, based on the maximal number of diagonals on an $n$-gon.  In summary, we have:

\begin{prop} \label{p:embed}
The simplicial fan $\sn{n}$ naturally embeds into $\R^\delta$.
\end{prop}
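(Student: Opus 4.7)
The plan is to define the embedding $\Phi \colon \sn{n} \hookrightarrow \R^\delta$ orthant-by-orthant and then verify that the orthant pieces assemble consistently along the gluing. First I would index the coordinates of $\R^\delta$ by the set of nontrivial splits of $X$; note that $\delta$ is exactly the number of such splits, as the displayed count in~(\ref{e:split}) records. Given a circular split network represented in the orthant associated to a circular ordering $\pi$, I send it to the vector in $\R^\delta$ whose $S$-th coordinate equals the weight of the split $S$ if $S$ occurs in the network, and $0$ otherwise. On a single orthant this is visibly a linear isomorphism onto the coordinate subspace of $\R^\delta$ spanned by the $n(n-3)/2$ axes indexed by splits compatible with $\pi$ (equivalently, the diagonals of the $n$-gon labeled by $\pi$).

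Next I would verify well-definedness across the gluing. A point in the intersection of two orthants corresponds to a weighted split system $\Sp$ that is circular with respect to two distinct orderings $\pi_1,\pi_2$; by Theorem~\ref{t:twist}, one polygonal representation is obtained from the other by a sequence of twists along noncrossing diagonals. A twist permutes the edge labels on one piece of the polygon but does not change which partitions of $X$ occur as splits, nor does it change their weights. Consequently, the split-indexed coordinate vector produced from the $\pi_1$-representation agrees with the one produced from the $\pi_2$-representation, so $\Phi$ is well-defined on the quotient that defines $\sn{n}$.

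Injectivity then follows because a weighted circular split network is determined by its weighted split system (the underlying circular ordering is not part of the data, only a choice of representation), and $\Phi$ literally records that weighted split system as a vector in $\R^\delta$. Finally, since each orthant maps linearly and isomorphically onto a coordinate subspace of $\R^\delta$, the image of $\sn{n}$ is a union of coordinate orthants glued along coordinate subspaces, which is the standard definition of a simplicial fan; this upgrades the continuous embedding $\Phi$ to a map of simplicial fans.

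The main obstacle is the well-definedness step, as one needs to be sure that a single weighted split system carries an unambiguous vector of coordinates independent of the chosen polygonal representation. Theorem~\ref{t:twist} reduces this to checking invariance under a single twist, which is straightforward since a twist is purely a relabeling operation on the polygon that preserves both the combinatorial splits and the weights assigned to them. Once this is in place, the remaining points are essentially bookkeeping about coordinate subspaces of $\R^\delta$.
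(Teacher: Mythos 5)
Your proof is correct and follows essentially the same route as the paper, which offers no separate proof of this proposition but treats the preceding paragraph---coordinates of $\R^\delta$ indexed by the $\delta = 2^{n-1}-n-1$ nontrivial splits, with each network recorded by the weights of its splits and at most $n(n-3)/2$ nonzero entries per orthant---as the argument. Your extra step checking that the coordinate vector is independent of the chosen polygonal representation via Theorem~\ref{t:twist} merely makes explicit the well-definedness that the paper leaves implicit.
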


\begin{rem}
Recently, Hacking, Keel, and Tevelev \cite{hkt} have shown how to build this simplicial fan $\sn{n}$ canonically from the root system $D_n$ based on real algebraic geometry of del Pezzo surfaces.  Indeed, precise analogs for $E_6, E_7, E_8$ are also provided, based on work by Sekiguchi and Yoshida \cite{sy}.
\end{rem}

\begin{figure}[h]
\includegraphics[width=.9\textwidth]{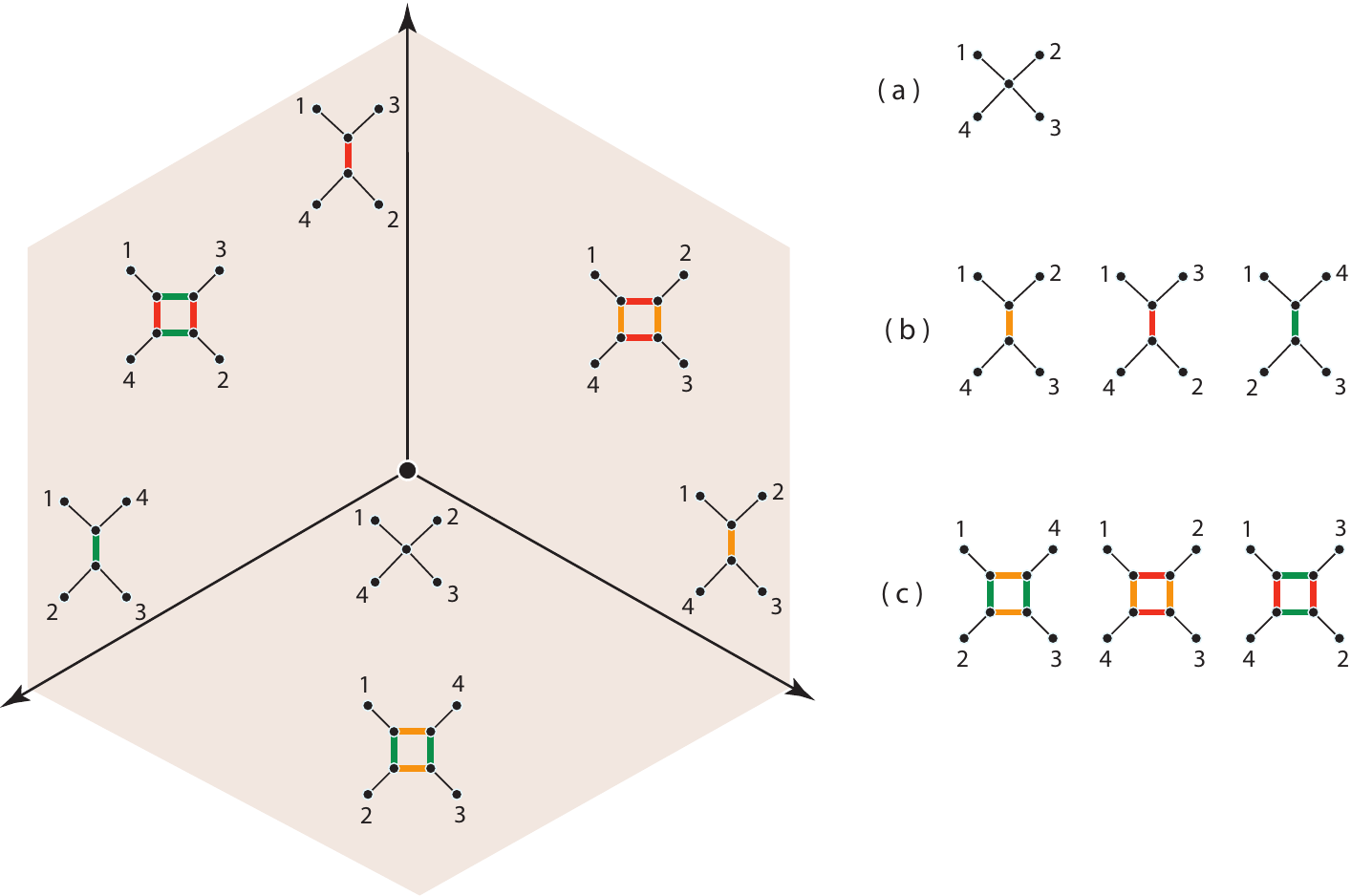}
\caption{The space of circular split networks for $n=4$.}
\label{f:csn4}
\end{figure}

\begin{exmp}
Figure~\ref{f:csn4} shows the case for $\sn{4}$, the space of metric circular split networks for four species. Three quadrants $\R^2_{\geq 0}$ tile this space (c), corresponding to the distinct circular orderings of the four labels.  These quadrants glue along the three boundary rays (b), where all meet at the origin, the degenerate network with no internal edges (a).  Thus $\sn{4}$ is homeomorphic to the plane $\R^2$.  Note that $\BHV{4}$ from Figure~\ref{f:bhv4}(a) is a natural subspace of $\sn{4}$.
\end{exmp}

%%%%%%%%%%%%%%%%%%%%%%%%%%%%%%%%%%%%%%%%%%%%%%%%%%%%%%%%%%%%%%%%%%%%%%%%%%%%%%%%%%%%%%%%%
\subsection{}
Indeed, $\BHV{n}$ is simply the subspace of $\sn{n}$ restricted to \emph{pairwise compatible} split systems.   Another natural subspace of $\sn{n}$ is the \emph{link of the origin} $\n{n}$, the union of the set of points in each orthant with internal edge lengths of networks that sum to 1.  Since the set of such points in a single orthant forms a simplex, the following is immediate:
 
\begin{prop}
Network space $\n{n}$ is a connected simplicial complex of dimension \break \mbox{$(n(n-3)/2) - 1$}, with one $k$-simplex for every labeled $n$-gon with $k+1$ diagonals.
\end{prop}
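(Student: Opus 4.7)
The plan is to verify in order the three assertions packaged into the proposition: the dimension formula, the simplex count, and connectedness, by unpacking the structure $\n{n}$ inherits as the link of the origin in $\sn{n}$. For the dimension, each orthant of $\sn{n}$ is $[0,\infty)^{n(n-3)/2}$ with coordinates indexed by the $n(n-3)/2$ possible diagonals of the labeled $n$-gon associated to that orthant's circular ordering. Its intersection with the affine hyperplane $\{\sum x_i = 1\}$ is the standard $(n(n-3)/2 - 1)$-simplex, and since these are the maximal cells and all have the same dimension, $\dim \n{n} = n(n-3)/2 - 1$.

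For the simplicial structure and simplex count, I would argue in one orthant first, then pass to the quotient. Within a single orthant the $k$-faces of the top simplex are in bijection with choices of exactly $k+1$ coordinates to be strictly positive, each such choice being a selection of $k+1$ diagonals in the labeled $n$-gon indexing the orthant. Before gluing, the $k$-cells are therefore encoded by pairs (ordering, $(k+1)$-diagonal set). The gluing prescribed in the construction of $\sn{n}$ identifies two such pairs precisely when they define the same split system, and by Theorem~\ref{t:twist} this occurs exactly when one pair is obtained from the other by a sequence of twists along noncrossing diagonals (of type (i) inside the chosen set, or type (ii) outside it). The resulting $k$-cells of $\n{n}$ are therefore indexed by labeled $n$-gons with $k+1$ diagonals under this equivalence, as claimed.

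For connectedness, it suffices to connect any two top-dimensional simplices by a chain of adjacent cells. Given orderings $\pi_1$ and $\pi_2$, applying Theorem~\ref{t:twist} to the empty split system yields a finite sequence of twists transforming $\pi_1$ into $\pi_2$. Each individual twist along a diagonal $d$ fixes $d$ as a split of both the pre- and post-twist orderings, so the two top simplices straddling that twist share the vertex of $\n{n}$ corresponding to the singleton split system $\{d\}$. Concatenating these wall-crossings produces an explicit path in $\n{n}$ from any chosen top simplex to any other.

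I expect the main obstacle to lie in the second step: carefully verifying that the identifications forced by Theorem~\ref{t:twist} are compatible with a genuine simplicial (as opposed to merely $\Delta$-complex) structure. One must check that no two distinct $k$-subfaces of a single orthant get identified with each other by the twist equivalence, so that every $k$-simplex of $\n{n}$ has $k+1$ distinct vertices and any two simplices meet along at most one common face. This ultimately reduces to the observation that a nontrivial twist always changes the ambient circular ordering, so any identification between two faces of the same orthant would force the empty sequence of twists and hence equality of the diagonal sets.
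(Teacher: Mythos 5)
Your argument is correct and follows the same route as the paper, which simply observes that the unit-sum locus in each orthant is a standard simplex of dimension $n(n-3)/2-1$ and declares the proposition immediate from the gluing by split systems. The extra details you supply --- the connectedness via twists through shared vertices and the check that distinct faces of one orthant are never identified --- are exactly the points the paper leaves implicit, and your verifications of them are sound.
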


\noindent Figure~\ref{f:n4} shows $\n{4} \subset \sn{4}$, the 1-dimensional simplicial complex formed by gluing three edges together, forming a triangle, parametrizing circular networks whose internal edge lengths sum to 1.  

\begin{figure}[h]
\includegraphics[width=\textwidth]{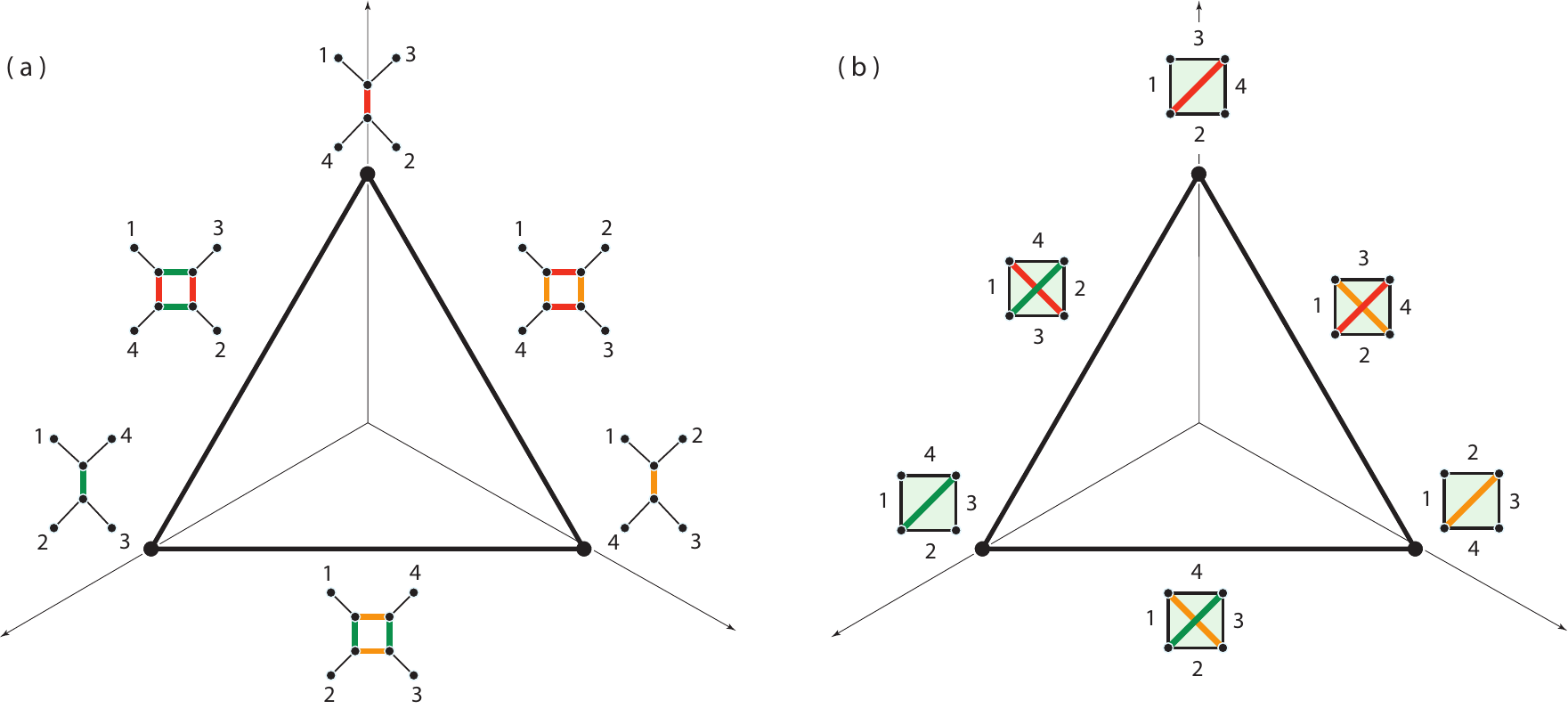}
\caption{The simplicial complex (a) $\n{4}$ and (b) its polygonal representation.}
\label{f:n4}
\end{figure}

It was shown in \cite{bhv} that $\BHV{n}$ was a CAT(0) space, ensuring that any two points had a unique geodesic between them.  This is not the case for networks, however, due to the underlying geometry of $\n{n}$.

\begin{prop}
The space $\sn{n}$ of circular split networks is not \textup{CAT(0)}.
\end{prop}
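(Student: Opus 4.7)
The natural approach is to exploit that $\sn{n}$ is assembled from orthants glued at the origin, so it is a Euclidean cone with apex the origin over the complex $\n{n}$ sitting as the unit sphere. By the standard cone characterization (Bridson--Haefliger, Theorem II.3.14), $\sn{n}$ is CAT(0) if and only if $\n{n}$ is CAT(1), reducing the problem to a single link computation at the cone point.

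Each top-dimensional simplex of $\n{n}$ is the intersection of a coordinate orthant with the unit sphere, hence an all-right spherical simplex (every edge has spherical length $\pi/2$). Gromov's lemma for all-right piecewise-spherical complexes then says $\n{n}$ is CAT(1) if and only if the underlying simplicial complex is flag: every clique must span a simplex. Since a simplex of $\n{n}$ corresponds to a collection of splits jointly compatible with some common circular ordering, and an edge to a pair that is pairwise so, flagness translates to the implication ``pairwise circular compatibility $\Rightarrow$ joint circular compatibility.''

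The plan is to exhibit, for every $n \ge 4$, a 3-clique that fails to complete. Take
\[
S_i \ := \ \bigl\{\{1,\, i+1\},\, X \setminus \{1,\, i+1\}\bigr\}, \qquad i = 1, 2, 3.
\]
Each pair $\{S_i, S_j\}$ is compatible with the circular ordering placing $1$ between $i+1$ and $j+1$, with the remaining species filling the opposite arc, so $\{S_1, S_2, S_3\}$ is a 3-clique of $\n{n}$. But joint compatibility of all three would require the label $1$ to be cyclically adjacent to each of $2, 3, 4$, impossible since $1$ has only two cyclic neighbors. Hence $\n{n}$ is not flag, not CAT(1), and $\sn{n}$ is not CAT(0).

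The main obstacle I anticipate is making the metric framework airtight: checking that $\sn{n}$ is honestly a Euclidean cone over $\n{n}$ with compatible all-right piecewise-spherical structure across all glued simplices, so that Gromov's flag criterion applies without qualification, and that the edge/simplex relations in $\n{n}$ correspond cleanly to pairwise versus joint circular compatibility via Theorem~\ref{t:twist}. Once this scaffolding is fixed, the combinatorial witness is routine. As an explicit low-dimensional sanity check, $\n{4}$ (Figure~\ref{f:n4}) is a topological circle built from three $\pi/2$-arcs, of total length $3\pi/2 < 2\pi$, and is therefore a closed geodesic too short for CAT(1), confirming the conclusion directly in the smallest case.
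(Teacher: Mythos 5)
Your proof is correct and takes essentially the same route as the paper: both reduce the question to flagness of the link of the origin via a Gromov-type criterion and then exhibit an ``empty triangle'' of splits that are pairwise but not jointly circularly compatible. The only differences are cosmetic --- the paper realizes $\sn{n}$ as a cubical complex and cites Gromov's cubical flag condition where you use the Euclidean-cone/CAT(1)/all-right-spherical formulation, and you supply an explicit witness triple $S_1,S_2,S_3$ where the paper points to a figure.
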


\begin{proof}
Subdivide each orthant of $\sn{n}$ into unit cubes having integral vertices, making $\sn{n}$ into a cubical complex. 
By a theorem of Gromov \cite{gro}, a cubical complex is CAT(0) if and only if it is simply connected and the link at every vertex is a flag complex.  
%Since $\sn{n}$ is a cone, it is contractible, and therefore simply connected.  
But Figure~\ref{f:empty} shows the link at the origin $\n{n}$ having \emph{empty triangles}, and therefore failing to be flag.  Here, the three edges of the triangle exist but there is no 2-simplex (represented by a polygon with three diagonals) bounding this triangle.
\end{proof}

\begin{figure}[h]
\includegraphics[width=.4\textwidth]{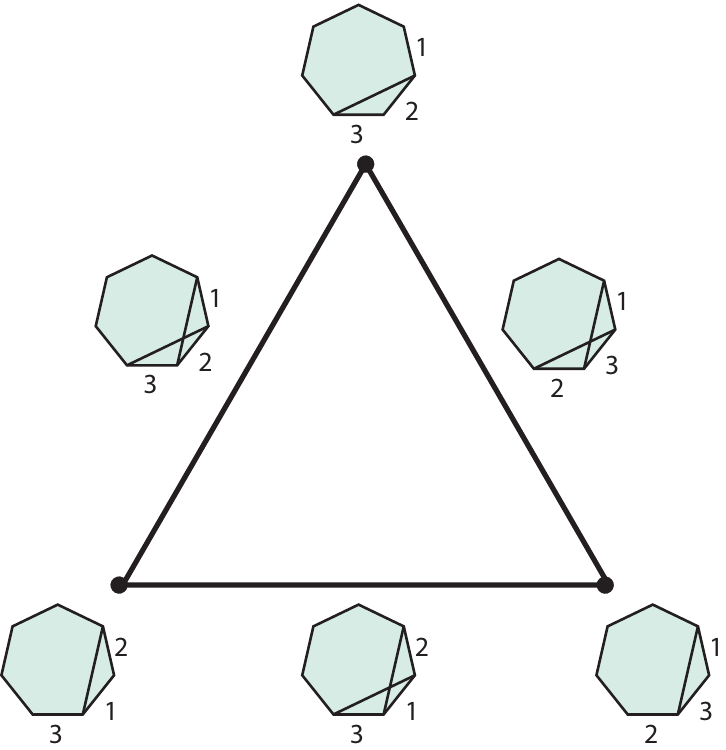}
\caption{Empty triangles in $\sn{n}$.}
\label{f:empty}
\end{figure}

%%%%%%%%%%%%%%%%%%%%%%%%%%%%%%%%%%%%%%%%%%%%%%%%%%%%%%%%%%%%%%%%%%%%%%%%%%%%%%%%%%%%%%%%%
%                                                      
%                Properties of  Network Space
%
%%%%%%%%%%%%%%%%%%%%%%%%%%%%%%%%%%%%%%%%%%%%%%%%%%%%%%%%%%%%%%%%%%%%%%%%%%%%%%%%%%%%%%%%%
\section{Combinatorics of Network Space}
\label{s:props}
\subsection{}

We explore some properties of the simplicial complex $\n{n}$.  For $n=4$, $\n{n}$ is a triangle, with three vertices and three edges.  Enumeration for larger values of $n$ follows:

\begin{thm}
For $n > 4$, the network space $\n{n}$  has $c = (n-1)!/2$ simplicial chambers of dimension $d = n(n-3)/2 - 1$.  It has $c(d+1)$ ridges, $\delta$ vertices and $\binom{\delta}{2}$ edges.  
\end{thm}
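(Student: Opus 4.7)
The plan is to verify the five counts separately, each leveraging the orthant-chamber correspondence established in Section~\ref{s:space}. The chamber count is immediate: orthants of $\sn{n}$ are indexed by circular orderings of $X$ modulo rotation and reflection, giving $c = n!/(2n) = (n-1)!/2$. The chamber dimension follows because each orthant has one coordinate per diagonal of an $n$-gon, of which there are $\binom{n}{2}-n = n(n-3)/2$, and intersecting with the unit simplex drops the dimension by one. The vertex count is equation~\eqref{e:split}, together with the observation that every split $\{A,B\}$ with $|A|,|B| \geq 2$ is realized as a diagonal in the polygon obtained by laying $A$ along one arc and $B$ along the other.

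For the edge count, I would show that any two distinct splits $S_1 = \{A_1,B_1\}$ and $S_2 = \{A_2,B_2\}$ admit a common compatible circular ordering. Partition $X$ into the four blocks $A_1 \cap A_2$, $A_1 \cap B_2$, $B_1 \cap B_2$, $B_1 \cap A_2$; at most one of these is empty when $S_1 \neq S_2$. Laying the blocks out cyclically in this order makes $A_1, B_1$ each occupy two consecutive blocks, and similarly for $A_2, B_2$, so both splits manifest as diagonals; hence every pair of splits spans an edge, yielding $\binom{\delta}{2}$ edges.

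The substantive step is the ridge count. Each $d$-simplex chamber contributes $d+1$ facets, so $c(d+1)$ is an immediate upper bound; the task is to show it is tight for $n > 4$, i.e., every ridge lies in exactly one chamber. Fix a ridge arising from ordering $\pi = (x_1,\dots,x_n)$ by removing a single diagonal $\rho$. The key tool is the collection of $n$ \emph{size-two splits} $\{\{x_i,x_{i+1}\},\ X \setminus \{x_i,x_{i+1}\}\}$ coming from cyclically adjacent pairs of polygon edges: for $n \geq 5$ these splits are pairwise distinct, and their size-two parts form a Hamiltonian cycle on $X$ that determines $\pi$ up to rotation and reflection. If $\rho$ is not size-two, the entire cycle survives the removal and $\pi$ is recovered. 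If $\rho$ is size-two, the surviving $n-1 \geq 4$ size-two splits form a Hamiltonian path whose two endpoints uniquely identify the missing edge of the cycle, and hence $\pi$.

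The main obstacle is isolating the hypothesis $n > 4$ in this ridge argument. For $n = 4$, cyclically opposite pairs of edges yield the same split, so there are only two distinct size-two splits per chamber; removing one leaves just a single split, which extends to a full diagonal set in more than one way, and ridges are genuinely shared between chambers---which is precisely why the formula fails in that case.
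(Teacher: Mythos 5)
Your proposal is correct, and its skeleton --- counting chambers, ridges, vertices, and edges separately through the labeled-polygon model --- is the same as the paper's. The substantive difference is that you prove the two claims the paper's proof only asserts. For the edge count, the paper says merely that ``each pair of splits is contained in a circular split system''; your four-block construction (laying $A_1\cap A_2$, $A_1\cap B_2$, $B_1\cap B_2$, $B_1\cap A_2$ out cyclically, after checking that at most one block is empty for distinct splits) is a clean proof of that assertion. For the ridge count, the paper declares that ridges correspond to removing one diagonal from a maximal set, but never argues in that proof that no such $(d-1)$-face is shared between two chambers; that fact does follow from Theorem~\ref{t:glue}, which bounds chamber intersections by dimension $(n-3)(n-2)/2-1 = d-1-(n-4)$, but that theorem appears only later and is proved by a different device (counting the $xy\ge n-3$ diagonals that must be removed to make room for a twistable diagonal). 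Your argument --- recovering the circular ordering from the $n$ or $n-1$ surviving size-two splits, which trace out a Hamiltonian cycle or path on $X$ --- is a self-contained alternative, and it isolates exactly where the hypothesis $n>4$ enters, namely that antipodal pairs of polygon edges yield the same split when $n=4$. Both routes work; yours is more elementary and local to this theorem, while the paper's Theorem~\ref{t:glue} buys the sharper general statement about all shared faces, not just ridges.
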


\begin{proof}
The chambers are the different ways of labeling an $n$-gon with the maximal set of diagonals. Since there are $n(n-3)/2$ diagonals on an $n$-gon, the number of ridges (codim 1 faces) corresponds to removing one of these diagonals from the maximal set.  The vertices enumerate the set of networks with one split, obtained in Eq.~\eqref{e:split} above. And since each pair of splits is contained in a circular split system, the enumeration of edges follows.
\end{proof}

\begin{cor} \label{c:complete}
The 1-skeleton of $\n{n}$ is the complete graph on $\delta$ vertices.
\end{cor}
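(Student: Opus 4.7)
The plan is to deduce the corollary directly from the theorem. The theorem gives that $\n{n}$ has exactly $\delta$ vertices and exactly $\binom{\delta}{2}$ edges; since a simple graph on $\delta$ vertices has at most $\binom{\delta}{2}$ edges, achieving equality forces the 1-skeleton to be the complete graph $K_\delta$. So the real content is the theorem's assertion ``each pair of splits is contained in a circular split system,'' and I would spell this out as the substantive combinatorial step.

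Specifically, I would argue: given any two distinct nontrivial splits $S_1 = \{A_1, B_1\}$ and $S_2 = \{A_2, B_2\}$ of $X$, I need to exhibit a circular ordering $\pi$ of $X$ under which both $S_1$ and $S_2$ are of the form $\{\{x_{i+1}, \dots, x_j\},\{x_{j+1}, \dots, x_i\}\}$, i.e., both arise as diagonals of some labeled $n$-gon. Partition $X$ into the four blocks
\[
A_1 \cap A_2,\quad A_1 \cap B_2,\quad B_1 \cap A_2,\quad B_1 \cap B_2.
\]
If all four blocks are nonempty, arrange the elements of $X$ around the polygon so that the blocks appear in the cyclic order $A_1\cap A_2,\ A_1\cap B_2,\ B_1\cap B_2,\ B_1\cap A_2$ (with elements within each block placed consecutively, in any order). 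Then $S_1$ is realized by the diagonal separating the first two blocks from the last two, and $S_2$ by the diagonal separating the first and fourth blocks from the middle two. If one of the four intersections is empty, the two splits are pairwise compatible, and any cyclic ordering arising from a Buneman tree containing both splits gives a polygonal representation in which they appear as noncrossing diagonals.

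This yields at least one polygonal representation, hence at least one maximal cell of $\n{n}$, in which both splits appear as diagonals; consequently the edge joining the two corresponding vertices is present in $\n{n}$. Since the pair was arbitrary, every pair of vertices is connected by an edge, and the 1-skeleton is $K_\delta$.

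I do not anticipate any real obstacle: the block-arrangement trick is the only nontrivial ingredient, and the cleaner route is simply to cite the edge count from the theorem. I would probably present the corollary in one line as a counting consequence, and relegate the block-arrangement verification to the proof of the theorem where it naturally belongs (as justification for the claim that every pair of splits is simultaneously realizable).
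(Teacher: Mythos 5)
Your proposal is correct and follows essentially the same route as the paper: the corollary is read off from the theorem's counts of $\delta$ vertices and $\binom{\delta}{2}$ edges, whose edge count rests on the claim that every pair of splits lies in a common circular split system. Your four-block cyclic arrangement $A_1\cap A_2,\ A_1\cap B_2,\ B_1\cap B_2,\ B_1\cap A_2$ correctly justifies that claim, which the paper asserts without proof.
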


Note that the number of chambers in which a $k$-cell of $\n{n}$ resides depends on the structure of polygon corresponding to the cell, not the dimension of the cell itself. A $k$-cell may be part of only one chamber or it might be part of several chambers of $\n{n}$. Figure~\ref{f:types} (a) - (c) depict polygon corresponding to a $14$-cell, $9$-cell, and $10$-cell, respectively, that are contained in exactly two distinct chambers of $\n{8}$.  Part (d) depicts another $10$-cell belonging to a unique chamber of $\n{8}$.

\begin{figure}[h]
\includegraphics[width=.9\textwidth]{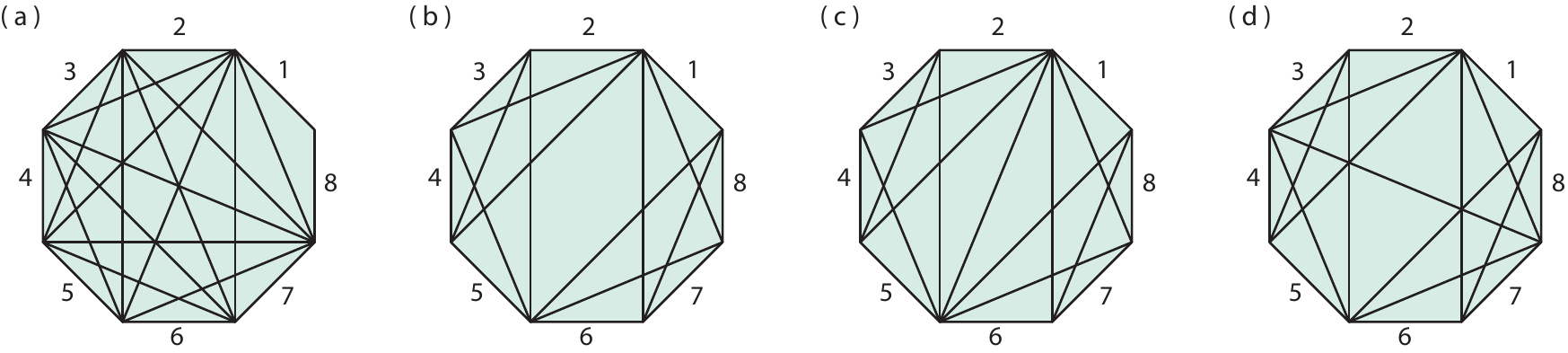}
\caption{Cells (a), (b), (c) belong to two distinct chambers, whereas cell (d) to a unique chamber of $\n{8}$.}
\label{f:types}
\end{figure}

\begin{thm} 
\label{t:glue}
Two chambers of $\n{n}$ can intersect along a face of at most dimension 
$$(n-3)(n-2)/2  - 1.$$
\end{thm}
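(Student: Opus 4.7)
The plan is to identify the intersection of two chambers with a simplex of common splits, and then use Theorem~\ref{t:twist} to bound its size. Each chamber of $\n{n}$ is an $(n(n-3)/2-1)$-simplex corresponding to a circular ordering $\pi$ (up to rotation and reflection), with faces indexed by subsets of the $n(n-3)/2$ splits compatible with $\pi$. Two distinct chambers, corresponding to orderings $\pi_1 \neq \pi_2$, therefore intersect along the face indexed by the split system $\Sp$ of all splits compatible with both $\pi_1$ and $\pi_2$, a face of dimension $|\Sp| - 1$. It thus suffices to prove
\[
|\Sp| \ \leq \ \frac{(n-3)(n-2)}{2}.
\]

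To apply Theorem~\ref{t:twist}, observe that $\Sp$ is by construction compatible with both $\pi_1$ and $\pi_2$, so there exists a sequence of twists along noncrossing diagonals transforming the polygonal representation $(\Sp,\pi_1)$ into $(\Sp,\pi_2)$. Since $\pi_1 \neq \pi_2$ modulo rotation and reflection, this sequence is nonempty. Let $d$ be the diagonal of the first twist. By the remark following Theorem~\ref{t:twist}, whether $d$ is of type (i) or type (ii), $d$ does not cross any diagonal in the polygonal representation of $\Sp$; equivalently, every diagonal representing a split in $\Sp$, viewed within the polygon $(P,\pi_1)$, lies on one side of $d$.

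The main calculation will count the diagonals of an $n$-gon that do not cross $d$. If $d$ separates the polygon into sub-polygons with $k$ and $n-k$ edges (so $2 \leq k \leq n-2$), then a diagonal crosses $d$ exactly when its endpoints lie in the interiors of the two arcs, giving $(k-1)(n-k-1)$ crossing diagonals. Thus
\[
|\Sp| \ \leq \ \frac{n(n-3)}{2} - (k-1)(n-k-1).
\]
The quadratic $(k-1)(n-k-1)$ is concave in $k$ with roots at $k=1$ and $k=n-1$, so on $\{2,\dots,n-2\}$ it attains its minimum value $n-3$ at the endpoints $k=2$ and $k=n-2$. Substituting yields $|\Sp| \leq n(n-3)/2 - (n-3) = (n-3)(n-2)/2$, whence the intersection face has dimension at most $(n-3)(n-2)/2 - 1$.

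The delicate step is ensuring the first twist diagonal $d$ has the required noncrossing property regardless of whether $d \in \Sp$; this follows immediately from the remark after Theorem~\ref{t:twist}, since both type (i) and type (ii) diagonals are defined to be noncrossing with the diagonals of $\Sp$ in the polygonal representation, and the counting argument above only uses this noncrossing property. Tightness of the bound is witnessed by the adjacent-swap case $(k=2)$: when $\pi_2$ differs from $\pi_1$ by exchanging two consecutive labels, the polygons differ by reflecting a single triangle, and the common splits are precisely the $(n-3)(n-2)/2$ diagonals of $(P,\pi_1)$ not crossing the triangle-cutting diagonal.
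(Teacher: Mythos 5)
Your proof is correct and follows essentially the same route as the paper: both invoke Theorem~\ref{t:twist} to produce a noncrossing diagonal $d$ for the first twist, count the $(k-1)(n-k-1)$ diagonals crossing $d$ (the paper's $xy$ with $x+y=n-2$), and minimize this to $n-3$ at the endpoint case. Your write-up simply makes explicit a few steps the paper leaves implicit, namely identifying the intersection with the face of common splits and verifying tightness via the adjacent-swap case.
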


\begin{proof} 
By Theorem~\ref{t:twist}, it is necessary that there exists a possible noncrossing diagonal of the polygonal representation for two distinct circular ordering to be compatible.  An $n$-gon can have at most $n(n-3)/2$ diagonals, and $n-3$ of them must be removed in order to make room for a noncrossing diagonal $d$.  Indeed, there must be $x$ vertices of one side of $d$ and $y$ vertices on the other such that $x+y=n-2$.  It is necessary to remove $xy$ diagonals, which is minimized when $x=n-3$ and $y=1$, and the result follows.
\end{proof}

%%%%%%%%%%%%%%%%%%%%%%%%%%%%%%%%%%%%%%%%%%%%%%%%%%%%%%%%%%%%%%%%%%%%%%%%%%%%%%%%%%%%%%%%%
\subsection{}

The space $\n{5}$ is a 4-dimensional simplicial complex, composed of 12 chambers, each corresponding the unique circular orderings on five species labels.  There are seven different types of cells, corresponding to split networks with different structures, as outlined in Figure~\ref{f:celltypes}:  The rows denote dimension, polygon representation, network type, and enumeration in $\n{5}$, respectively. There are two distinct 2-cells (orange, yellow), each bounded by two types of edges (red, blue), detailed in the left side of Figure~\ref{f:n5}.

\begin{figure}[h]
\includegraphics[width=.9\textwidth]{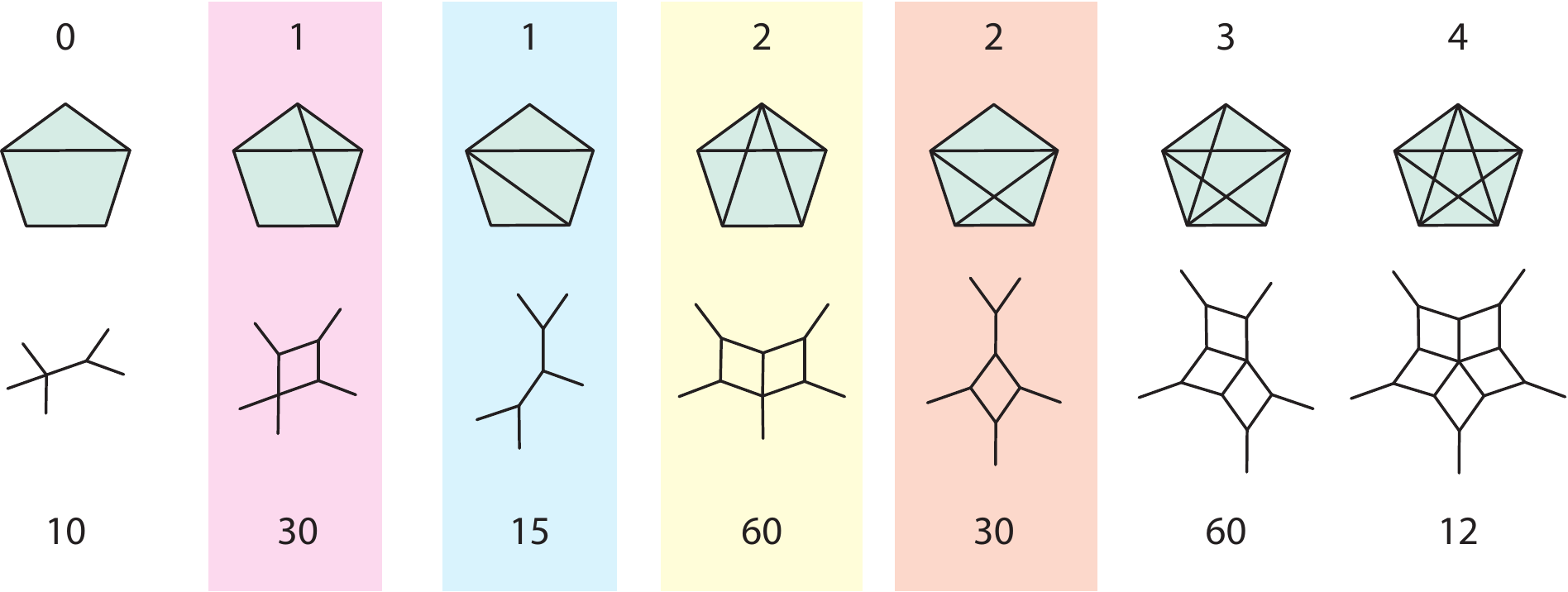}
\caption{The stratification of $\n{5}$ by distinct cell types, where the number of such cells is enumerated in the last line.}
\label{f:celltypes}
\end{figure}

The right side of Figure~\ref{f:n5} illustrates a (4-simplex) chamber in $\n{5}$.  Of the ten triangles appearing as faces, five are orange and five are yellow; two of each color are highlighted in the illustration.  Interestingly, the triangles of each color form a mob\"ius strip within the 4-simplex.  By Theorem~\ref{t:glue}, two chambers of $\n{5}$ meet only along vertices, edges, and triangles, and not along any of the tetrahedra.

\begin{figure}[h]
\includegraphics[width=\textwidth]{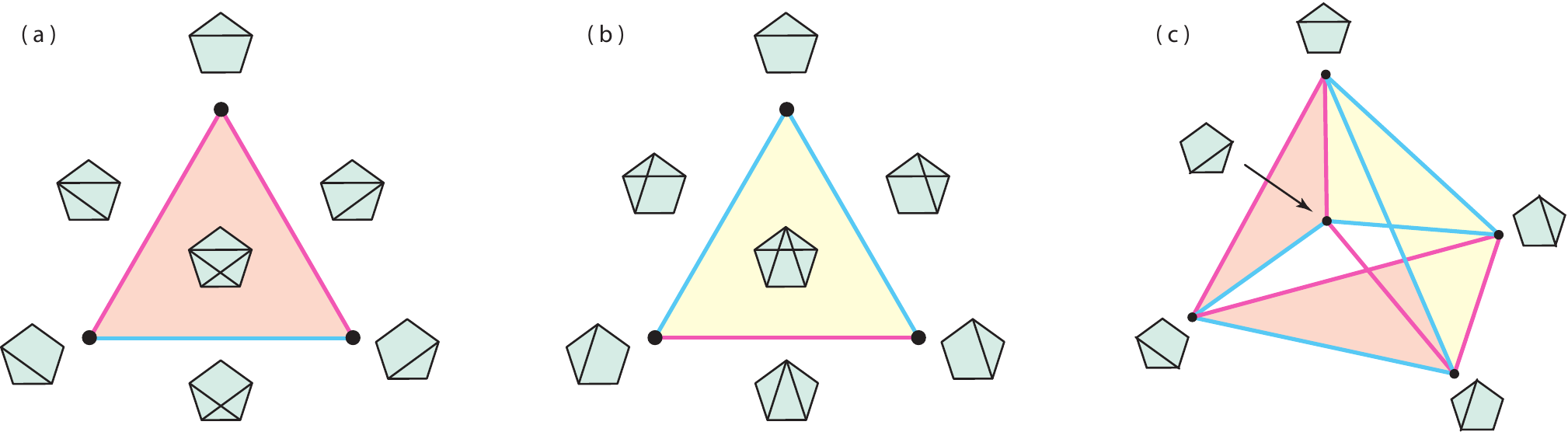}
\caption{The two types of triangles along with a partial labeling of a chamber in $\n{5}$.}
\label{f:n5}
\end{figure}

The global structure of the 1-skeleton of $\n{5}$ is shown in Figure~\ref{f:n5-skeleton}(a), with the coloring based on Figure~\ref{f:celltypes}. The red edges in (b) form the Peterson Graph, tree space $\T{5}$ from Figure~\ref{f:bhv4}(b), and the blue edges in (c) form the skeleton of a rectified 5-cell. 

\begin{figure}[h]
\includegraphics[width=\textwidth]{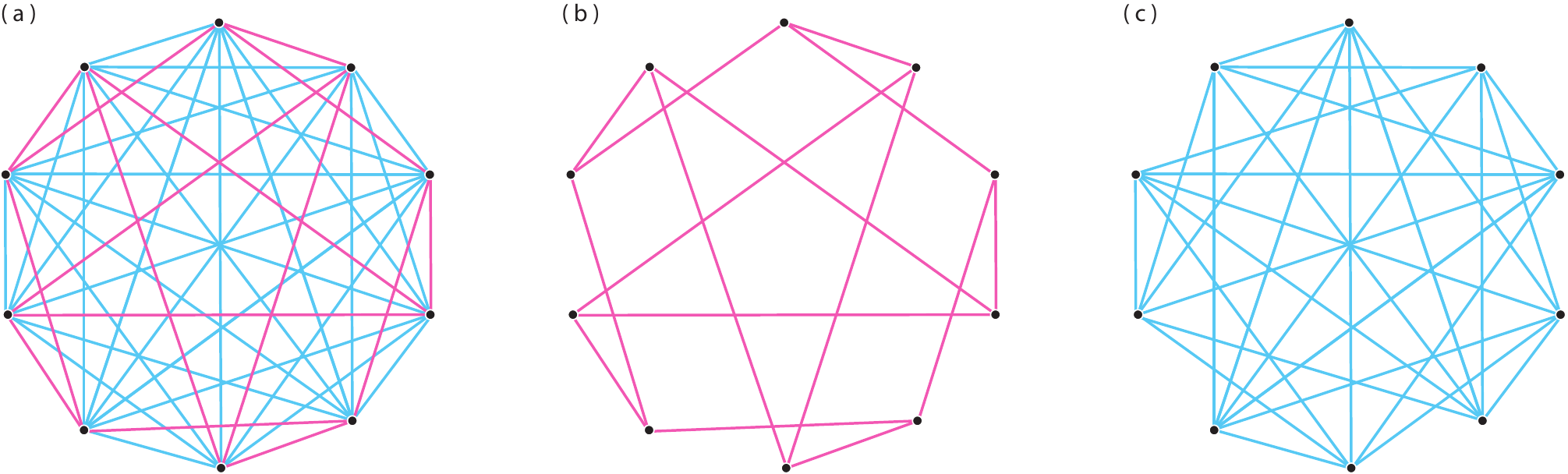}
\caption{The 1-skeleton of $\n{5}$, color-coded based on Figure~\ref{f:celltypes}.}
\label{f:n5-skeleton}
\end{figure}

%%%%%%%%%%%%%%%%%%%%%%%%%%%%%%%%%%%%%%%%%%%%%%%%%%%%%%%%%%%%%%%%%%%%%%%%%%%%%%%%%%%%%%%%%
%                                                      
%                Moduli Spaces
%
%%%%%%%%%%%%%%%%%%%%%%%%%%%%%%%%%%%%%%%%%%%%%%%%%%%%%%%%%%%%%%%%%%%%%%%%%%%%%%%%%%%%%%%%%
\section{Moduli spaces}
\label{s:moduli}
\subsection{}

An elegant polytope captures the structure of the space of planar \emph{rooted} trees:

\begin{defn} 
The \emph{associahedron} is a convex  polytope of dimension $n-2$ whose face poset is isomorphic to that of bracketings of $n$ letters, ordered so $a \prec a'$ if $a$ is obtained from $a'$ by adding new brackets.
\end{defn}

The  associahedron was constructed independently by Haiman (unpublished) and Lee \cite{lee}, though
Stasheff had defined the underlying abstract object twenty years previously, in his work on associativity in
homotopy theory \cite{sta}.  Figure~\ref{f:assoc}(a) shows the 2D associahedron $K_4$ with a labeling of
its faces, and (b) shows the 3D version $K_5$.
\begin{figure}[h]
\includegraphics{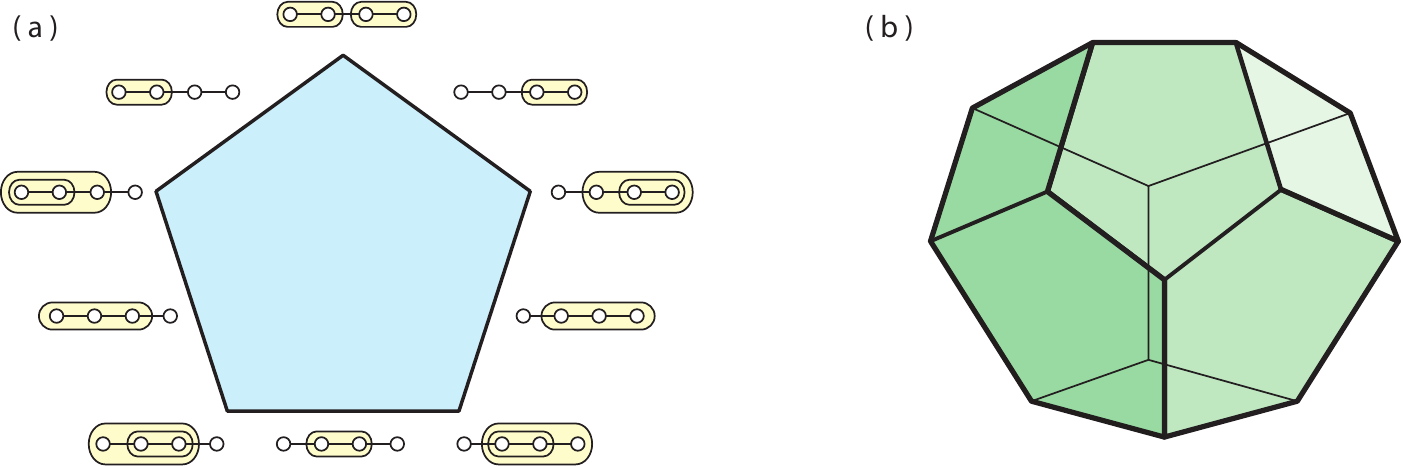}
\caption{Associahedra $K_4$ and $K_5$.}
\label{f:assoc}
\end{figure}
There are over a hundred combinatorial and geometric interpretations \cite{stan} of the Catalan number
\begin{equation} \label{e:catalan}                            
C_{n-1} \ = \ \frac{1}{n}\binom{2n-2}{n-1} \ ,
\end{equation} 
which index the vertices of the associahedron $K_n$. Most important to us is the relationship between bracketings of $n$ letters, rooted trees with $n$ species, and polygons with noncrossing diagonals, shown in Figure~\ref{f:assoc-tree}.
In particular, a codim $k$ face of the associahedron $K_{n-1}$ is associated to an (unlabeled) $n$-gon with $k$ noncrossing diagonals. 

\begin{figure}[h]
\includegraphics{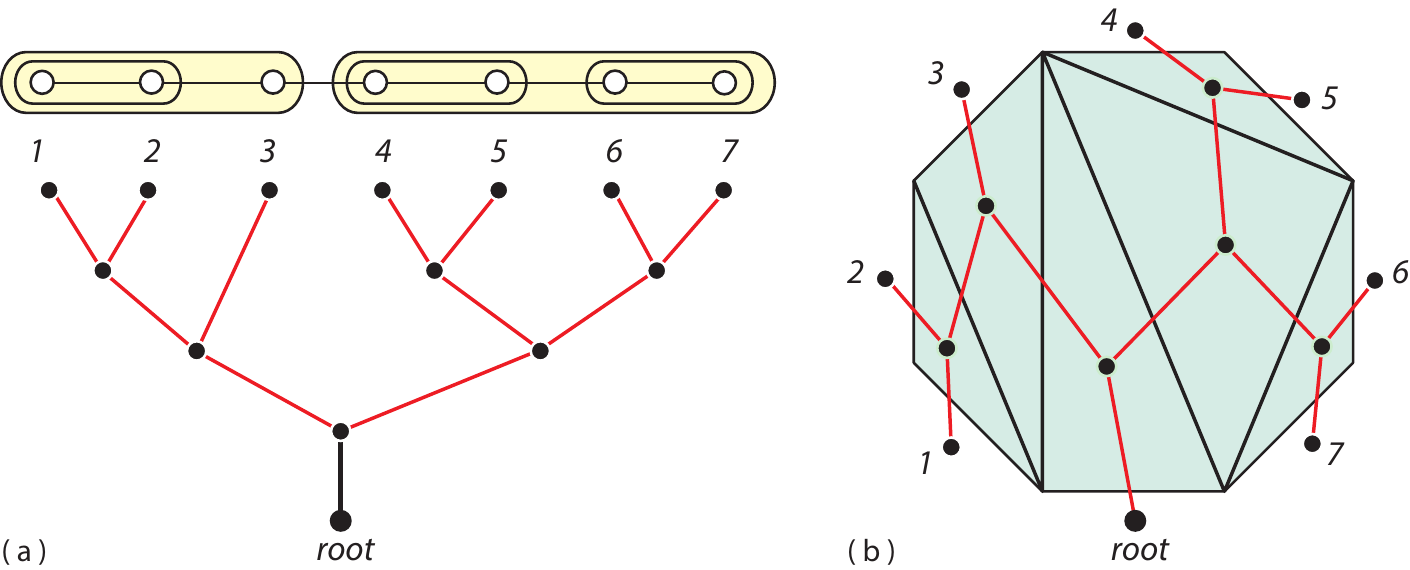}
\caption{(a) Bracketings and planar trees and (b) polygons with diagonals.}
\label{f:assoc-tree}
\end{figure}
 
%%%%%%%%%%%%%%%%%%%%%%%%%%%%%%%%%%%%%%%%%%%%%%%%%%%%%%%%%%%%%%%%%%%%%%%%%%%%%%%%%%%%%%%%%
\subsection{}

The moduli problem for algebraic curves has been a central problem in mathematics since Riemann.  In the 1970s 
it was solved over the integers $\Z$ by Deligne, Mumford, Knudsen \cite{git} and others, where a 
special case constructs a moduli space for \emph{real} algebraic curves of genus zero
marked with distinct smooth points.  That solution can be regarded as a good compactification \M{n}
of the space
$$\oM{n} \ = \ \Conf^{n}(\RP)/\PGL$$
of $n+1$ distinct particles on the real projective line. 

\begin{thm} \cite[Section 4]{dev1}
\label{t:tiling}
The moduli space \M{n} is tiled by $(n-1)!/2$ copies of $K_{n-1}$, one for each labeling of an $n$-gon, up to rotation and reflection.  Two faces of associahedra, represented by labeled polygons with noncrossing diagonals $P_1$ and $P_2$, are identified in \M{n} if twisting along certain diagonals of $P_1$ yields $P_2$.
\end{thm}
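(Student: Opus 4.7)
The plan is to understand the tiling in three steps: parametrize the open moduli space $\oM{n}$, identify each chamber's closure with the associahedron $K_{n-1}$, and describe the gluing of chambers via polygonal twists.

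First, I use that $\PGL$ acts sharply $3$-transitively on $\RP$, so I can normalize three of the marked points (say, sending them to $0,1,\infty$). The remaining marked points vary on $\RP$ avoiding coincidences, and the connected components of $\oM{n}$ are indexed by the cyclic arrangements of the $n$ labels along the circle $\RP$. Modding out by cyclic rotation (the choice of basepoint on the circle) and by reflection (from the orientation-reversing elements of $\PGL$), one is left with $(n-1)!/2$ components, one for each labeling of an $n$-gon up to rotation and reflection, matching the chamber count in the statement.

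Second, I would identify the closure of a single chamber, indexed by a cyclic ordering $\pi$, with $K_{n-1}$. A boundary point corresponds to a stable nodal genus zero curve whose components are copies of $\RP$ joined at nodes, with marked points distributed to respect $\pi$ on each component. The combinatorial type of such a curve is recorded by a planar tree, or equivalently by a set of noncrossing diagonals on the $\pi$-labeled $n$-gon, and by the correspondence recorded in Figure~\ref{f:assoc-tree} the face poset of such diagonal configurations is exactly that of $K_{n-1}$. To promote this combinatorial bijection to a homeomorphism of stratified spaces, I would appeal to Kapranov's realization of $\overline{M}_{0,n}$ as an iterated blowup of $(\RP)^{n-3}$, which concretely identifies the local polytopal structure at each boundary stratum with that of the corresponding face of $K_{n-1}$.

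Third, for the gluing, two faces of chamber associahedra, carrying labeled polygons $P_1$ and $P_2$ with noncrossing diagonals, are identified in $\M{n}$ precisely when they parametrize isomorphic stable curves. A collapsed diagonal $d$ produces a node joining two subcurves at a single point; since a nodal $\RP$-component carries no preferred orientation, independently reflecting the subcurve on one side of the node yields an isomorphic stable curve. On the polygonal side this reflection is exactly the twist along $d$ defined just before Theorem~\ref{t:twist}. Hence all face identifications in $\M{n}$ are generated by such twists.

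The main obstacle is Step 2: the face-poset isomorphism between stable-curve degenerations and noncrossing diagonals of a polygon is essentially formal, but upgrading it to a genuine homeomorphism (so that each chamber really is a polytope combinatorially and topologically equivalent to $K_{n-1}$) is not automatic. It requires either invoking the iterated blowup construction of $\overline{M}_{0,n}$ or a direct analysis in local cross-ratio coordinates near each boundary stratum; once this geometric identification is in place, both the chamber count and the twist-based gluing follow almost formally.
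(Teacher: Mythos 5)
This theorem is stated in the paper as a citation to \cite{dev1} and is not proved here, so there is no internal proof to compare against; your outline is essentially the standard argument of that reference (chamber count from the components of $\Conf^n(\RP)/\PGL$, identification of each chamber closure with $K_{n-1}$ via the stable-curve/noncrossing-diagonal correspondence, and gluing by reflecting a subcurve across a node, i.e.\ a twist). You correctly flag the one genuinely nontrivial point --- upgrading the face-poset bijection to a homeomorphism of each chamber with the associahedron --- which is exactly what the cited source supplies via explicit cross-ratio/blowup coordinates.
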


\noindent The moduli space \M{3} is a point, and the manifold \M{4} is homeomorphic to a circle, with the {\em cross-ratio} serving as the homeomorphism; it is tiled by three $K_3$ line segments, glued together to form a triangle, shown in Figure~\ref{f:m04}.  

\begin{figure}[h]
\includegraphics{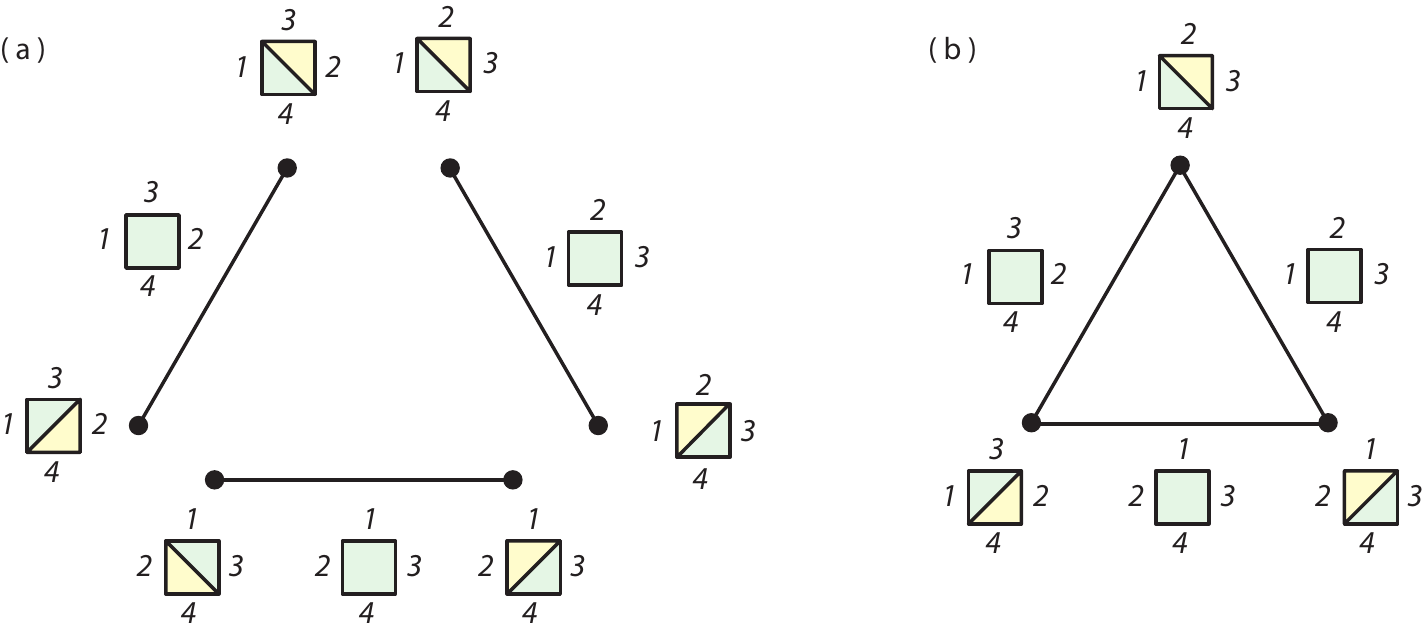}
\caption{Gluing three $K_3$ using twists to form \M{4}.}
\label{f:m04}
\end{figure}

\begin{exmp} 
An illustration of \M{5} appears in Figure~\ref{f:m0506}(a), resulting in the connected sum of a torus with three real projective planes, tiled by 12 associahedra $K_4$ from Figure~\ref{f:assoc}(a).  Part (b) shows the example for \M{6}, tiled by 60 copies of $K_5$.
\end{exmp}

\begin{figure}[h]
\includegraphics{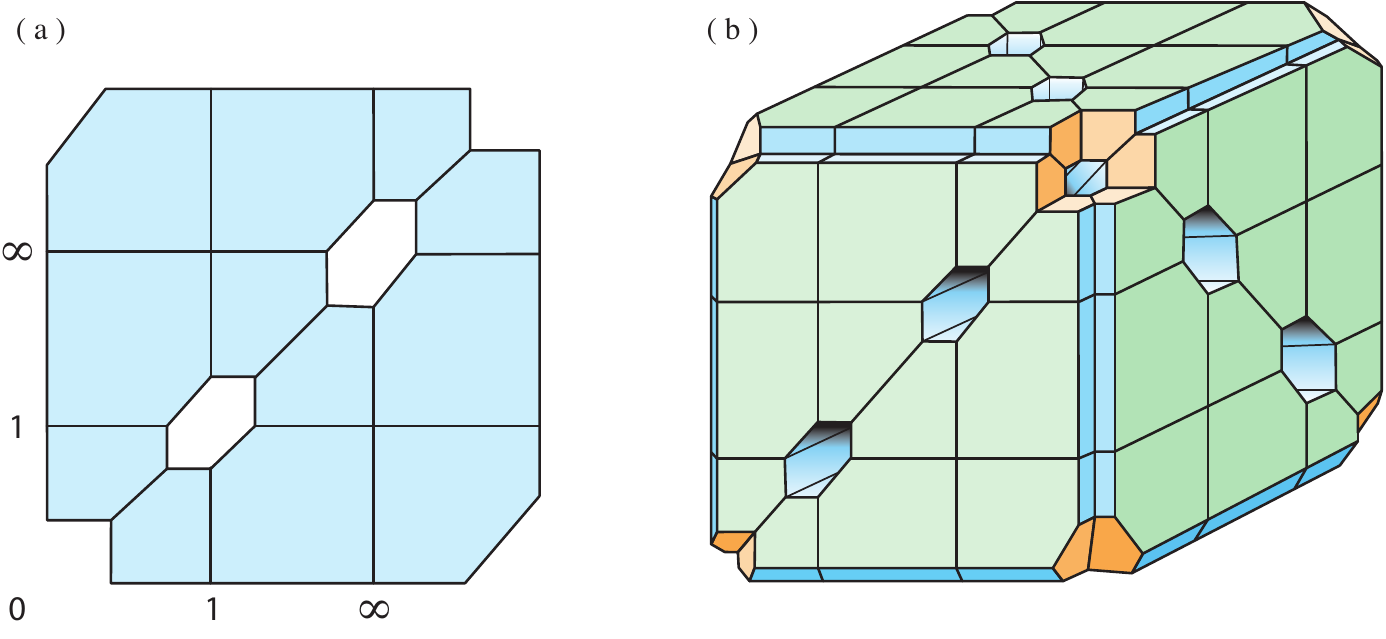}
\caption{(a) \M{5} and (b) \M{6} as blowups of tori.}
\label{f:m0506}
\end{figure}

%%%%%%%%%%%%%%%%%%%%%%%%%%%%%%%%%%%%%%%%%%%%%%%%%%%%%%%%%%%%%%%%%%%%%%%%%%%%%%%%%%%%%%%%%
\subsection{}

A natural embedding $\Phi$ of the moduli space \M{n} into the network space $\n{n}$ is now provided. Throughout this section, we fix some labeling of the $n$-gon thereby choosing a chamber of \M{n} and $\n{n}$.  The canonical coordinates for $\n{n}$ is given in Proposition~\ref{p:embed}, one for each possible split of $n$ species.  For a chosen labeling however, recall that at most $n(n-3)/2$ of these coordinates will have nonzero values, one for each diagonal of the $n$-gon.\footnote{To be consistent with the language of associahedra, we refer to sets of splits as sets of diagonals throughout this section.} Thus, the coordinate system in a chamber of $\n{n}$ is
\begin{equation}
\label{e:coords}
(x_1, \ x_2, \ \dots, \ x_{n(n-3)/2}) \, ,
\end{equation}
a dimension $x_i$ for each diagonal $d_i$ of the $n$-gon.

Consider the set of vertices $V(K_{n-1})$ of the associahedron, each a triangulation with exactly $n-3$ diagonals.  For each vertex $v$, assign the $d$-th coordinate of the map $\Phi(v)$ as
\begin{equation}
\label{e:facec}
\Phi_d(v) \ := \ 
\begin{cases}
    1/(n-3) & \ \ \ \text{if $d$ is a diagonal of $v$} \\
    0 & \ \ \ \text{otherwise}.
\end{cases}
\end{equation}
For a face $f$ of the associahedron, let $V_f$ be the subset of vertices of $V(K_{n-1})$ incident to $f$.  Assign to the barycenter $v_f$ of each face $f$ the coordinate
\begin{equation}
\label{e:barycenter}
\Phi(v_f) \ := \ \text{centroid} \ \{\Phi(v) \suchthat v \in V_f\} \, .
\end{equation}

\begin{rem}
It follows that $\Phi(v)$ is an element of $\n{n}$, the sum of the coordinates being 1.
\end{rem}

\begin{exmp}
Figure~\ref{f:average} illustrates this computation for several barycenters of the associahedron $K_6$.  The top row shows the nine polygons with one diagonal, each an axes in the coordinate system.  The left column displays four different barycenters of the subdivision, along with their respective coordinates in this system.
\end{exmp}

\begin{figure}[h]
\includegraphics[width=.8\textwidth]{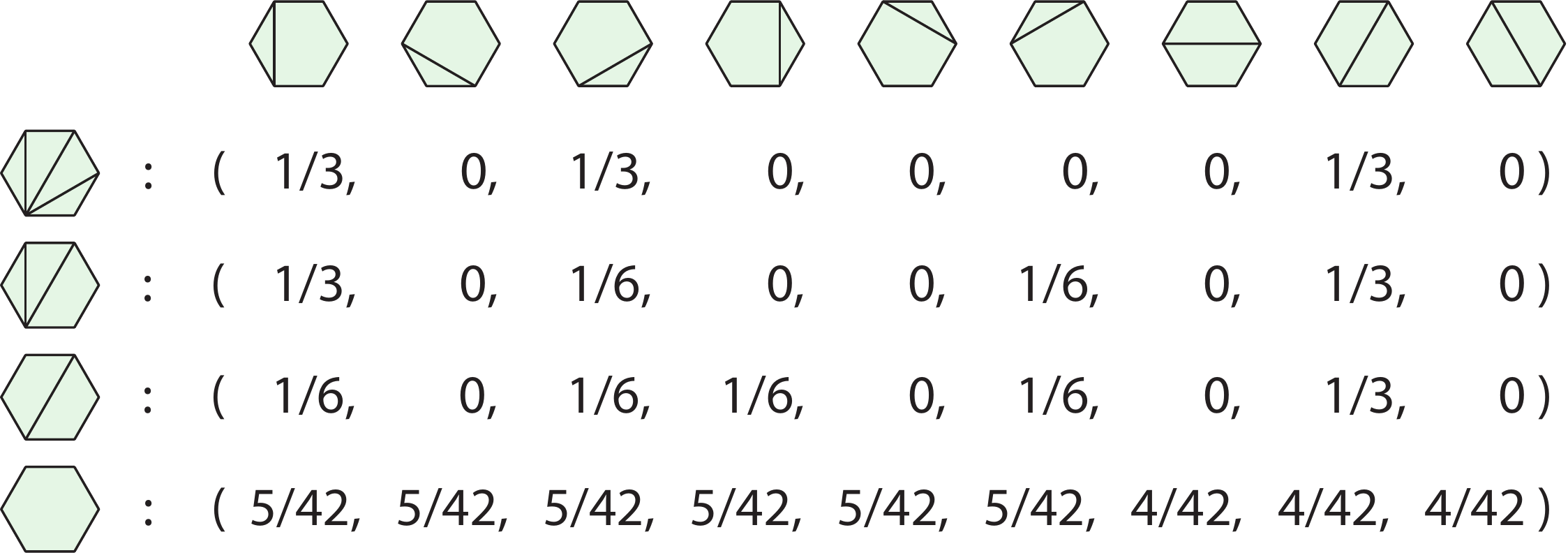}
\caption{Coordinates for barycenters.}
\label{f:average} 
\end{figure}

\begin{rem}
A \emph{flag} $F$ of the associahedron is a sequence $\{f_0, f_1, \dots f_{n-3}\}$ of faces such that $f_0 \subset f_1 \subset \dots \subset  f_{n-3}.$  We can reinterpret this as a sequence of subsets of diagonals, where the set $f_i$ contains exactly $n-3-i$ noncrossing diagonals.  
%Note that in the barycentric subdivision of $K_{n-1}$, each flag corresponds to an $(n-3)$-simplex.
\end{rem}

\begin{prop} \label{p:embed}
For flag $F$ of $K_{n-1}$, and $\Delta_F$ its associated simplex in its barycentric subdivision, the map
$$\Phi(\Delta_F) \ := \ \text{convex hull} \ \ \{\Phi(v_{f_0}), \ \Phi(v_{f_1}), \ \dots, \ \Phi(v_{f_{n-3}})\}$$
is an embedding of $\Delta_F$ into a chamber of $\n{n}$, where $v_{f_i}$ are the vertices of $\Delta_F$, the barycenters given in Eq.~\eqref{e:barycenter}.  This map $\Phi$ extends to the union of all the flags of $K_{n-1}$, providing an embedding of the associahedron into a chamber of $\n{n}$.
\end{prop}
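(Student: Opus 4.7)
The plan is to verify that $\Phi$, extended by affine interpolation on each simplex $\Delta_F$ of the barycentric subdivision of $K_{n-1}$, is a piecewise-affine continuous map that is injective on each $\Delta_F$ and assembles into a global embedding of $K_{n-1}$ into the chosen chamber of $\sn{n}$. First I would verify that each image $\Phi(v_f)$ lies in $\n{n}$ inside this chamber: every vertex image $\Phi(v)$ has $n-3$ coordinates equal to $1/(n-3)$ summing to $1$, supported on diagonals of the chosen labeled polygon; centroids preserve both properties, and the convex hull of such points inside a simplex of $\n{n}$ remains in that simplex.

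The main technical step is affine independence of the barycenters $\{\Phi(v_{f_0}), \dots, \Phi(v_{f_{n-3}})\}$ associated with a flag $F = \{f_0 \subset \dots \subset f_{n-3}\}$, with corresponding diagonal sets $D_0 \supsetneq D_1 \supsetneq \dots \supsetneq D_{n-3} = \emptyset$. For each $i \in \{0, \dots, n-4\}$, select the unique $\alpha_i \in D_i \setminus D_{i+1}$. When $j \leq i$, the inclusion $\alpha_i \in D_j$ forces every $v \in V_{f_j}$ to contain $\alpha_i$, so $\Phi_{\alpha_i}(v_{f_j}) = 1/(n-3)$. When $j = i+1$, a triangulation in $V_{f_{i+1}}$ contains $\alpha_i$ if and only if it extends $D_{i+1} \cup \{\alpha_i\} = D_i$, hence lies in the proper subset $V_{f_i} \subsetneq V_{f_{i+1}}$; so $\Phi_{\alpha_i}(v_{f_{i+1}}) = |V_{f_i}|/(|V_{f_{i+1}}|(n-3)) < 1/(n-3)$. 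Therefore the matrix with entries $\Phi_{\alpha_i}(v_{f_j}) - \Phi_{\alpha_i}(v_{f_{j+1}})$, for $i, j \in \{0, \dots, n-4\}$, is lower triangular with strictly positive diagonal, so the vectors $\Phi(v_{f_j}) - \Phi(v_{f_{j+1}})$ are linearly independent. Thus $\Phi$ is affinely injective on each $\Delta_F$.

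To globalize the embedding, observe that two simplices $\Delta_F$ and $\Delta_{F'}$ of the barycentric subdivision meet in the simplex indexed by their common sub-flag, and since $\Phi$ is defined at every barycenter, the restrictions automatically agree on shared sub-simplices. For injectivity across cells, I would reconstruct $F$ from an image: the diagonals $d$ with $\Phi_d(v_{f_i}) = 1/(n-3)$ are exactly $D_i$, so the nested chain $D_0 \supsetneq \dots \supsetneq D_{n-3}$ is combinatorially recoverable from the coordinate data at a point in the interior of $\Delta_F$, forcing $F$ to be unique. Since $K_{n-1}$ is compact and $\Phi$ is continuous, such a continuous injection is automatically a homeomorphism onto its image.

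The main obstacle is this final reconstruction step. The triangular structure of Step 2 secures injectivity within a single simplex, but ruling out coincidences between the images of distinct open simplices of the barycentric subdivision requires a careful combinatorial extraction of the flag and the internal barycentric coefficients from the ambient coordinates of $\Phi(x)$; carrying this out precisely is the delicate part of the proof.
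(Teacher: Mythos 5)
Your overall architecture matches the paper's: prove affine independence of the barycenters within each simplex $\Delta_F$, then establish global injectivity by showing that the flag and the barycentric coefficients can be reconstructed from the coordinates of an image point. Your affine-independence argument is correct and complete: choosing the diagonal $\alpha_i$ dropped between consecutive faces of the flag and observing that $\Phi_{\alpha_i}(v_{f_j}) = 1/(n-3)$ exactly when $f_j$ still contains $\alpha_i$, while $\Phi_{\alpha_i}(v_{f_{i+1}}) = \tfrac{1}{n-3}\,|V_{f_i}|/|V_{f_{i+1}}| < 1/(n-3)$, gives a triangular matrix with nonzero diagonal. This is a clean repackaging of the paper's Lemma~\ref{l:simplex}, which reaches the same conclusion by induction, showing each new barycenter has a coordinate strictly larger than any point in the hull of its predecessors.

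The genuine gap is the global injectivity step, which you name as ``the delicate part'' and do not carry out. The mechanism you sketch --- that the diagonals $d$ with $\Phi_d(v_{f_i}) = 1/(n-3)$ recover the set $D_i$ --- only works if you already have the individual barycenters $\Phi(v_{f_i})$ in hand. From an arbitrary point $x = \sum_i a_i \Phi(v_{f_i})$ you only see the mixture: the test $x_d = 1/(n-3)$ recovers just the diagonals common to \emph{every} face of the subflag with positive weight, i.e.\ one extreme end of the chain, and for a point interior to a top-dimensional simplex of the barycentric subdivision this set carries no information about the intermediate faces or the coefficients. Ruling out coincidences between images of distinct open simplices requires disentangling this mixture, and that is precisely what the paper's Lemma~\ref{l:newlem} does: an iterative algorithm that, at each stage, tests every subpolygon $P_r$ of the current face against the Catalan-ratio benchmark $\tfrac{1}{n-3}\cdot\tfrac{C_{r-1}}{C_r}$ from Proposition~\ref{p:cp}, takes a minimum over candidate diagonals to extract the partial sum $a_j + \cdots + a_{k-1}$, and then minimizes over subpolygons to isolate $a_j$ and identify $f_{j+1}$ via Lemma~\ref{l:iff}. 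None of this quantitative Catalan analysis appears in your proposal, and without it the uniqueness of the representing subflag --- hence injectivity of $\Phi$ across cells --- is not established. The rest of your argument (images landing in $\n{n}$, agreement on shared subsimplices, compactness giving a homeomorphism onto the image) is fine once that lemma is supplied.
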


The proof of this proposition is relegated to the next section.  We can extend this result for all of \M{n}.

\begin{thm} \label{t:moduliembed}
The map $\Phi$ naturally extends to an embedding of \M{n} into $\n{n}$.
\end{thm}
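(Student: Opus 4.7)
The plan is to extend the chamber-wise embedding of Proposition~\ref{p:embed} across all of \M{n} by observing that the face identifications that glue associahedra in \M{n} (Theorem~\ref{t:tiling}) are mirrored exactly by the chamber gluings of \sn{n} (Theorem~\ref{t:twist}). Both spaces are tiled by $(n-1)!/2$ top-dimensional pieces indexed by circular orderings of $X$ up to rotation and reflection, and in both cases the combinatorial operation that identifies faces across pieces is a twist along a noncrossing diagonal. This parallel is the engine of the proof.

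For each labeling $\pi$ of an $n$-gon, Proposition~\ref{p:embed} yields an embedding $\Phi_\pi : K_{n-1} \hookrightarrow \sn{n}$ whose image lies in the chamber for $\pi$. I would assemble these $(n-1)!/2$ maps into a single map on the disjoint union $\bigsqcup_\pi K_{n-1}^\pi$ and then show that it descends to a well-defined continuous map on the quotient \M{n}.

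The central compatibility check runs as follows. Suppose faces $f \subset K_{n-1}^{\pi_1}$ and $f' \subset K_{n-1}^{\pi_2}$ are identified in \M{n} via a twist sequence. Then the set of diagonals of $f$ in the $\pi_1$-polygon and those of $f'$ in the $\pi_2$-polygon encode identical partitions of $X$, since twisting along a noncrossing diagonal preserves every split partition. By Eq.~\eqref{e:facec}, the coordinate $\Phi_{\pi_i}(v)$ at a vertex $v$ incident to $f$ depends only on which splits appear in that triangulation, not on the polygonal realization, so the vertex coordinate vectors assigned to $f$ and $f'$ coincide. Via Eq.~\eqref{e:barycenter} and the convex-hull formula of Proposition~\ref{p:embed}, the images of the flag simplices inside $f$ and $f'$ then coincide as well. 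Meanwhile, Theorem~\ref{t:twist} guarantees that the chambers of \sn{n} for $\pi_1$ and $\pi_2$ are glued precisely along the face parametrizing this shared split system. Hence $\Phi_{\pi_1}(f) = \Phi_{\pi_2}(f')$ as subsets of \sn{n}, and $\Phi$ descends to a continuous map on \M{n}.

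Injectivity within each chamber is Proposition~\ref{p:embed}; between distinct chambers, the interiors of the chambers of \sn{n} are pairwise disjoint, so a collision could occur only on shared boundary cells, which are exactly the cells already identified in the quotient \M{n}. The main obstacle is to make rigorous that the convex-hull construction of Proposition~\ref{p:embed} is intrinsic to the split data rather than to the polygonal drawing, so that it respects every twist identification; this should follow cleanly once one records that Eq.~\eqref{e:facec} references only splits, never polygonal geometry, and that Eq.~\eqref{e:barycenter} is invariant under the relabeling of diagonals induced by a twist.
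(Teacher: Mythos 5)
Your proposal is correct and follows essentially the same route as the paper: pass to the global split-indexed coordinates of Proposition~\ref{p:embed}, observe that twisting preserves every split so the chamber-wise maps agree on identified faces, and invoke Theorem~\ref{t:tiling} and Theorem~\ref{t:twist} to match the gluings of \M{n} with those of $\n{n}$. Your version simply spells out the descent-to-the-quotient and cross-chamber injectivity checks that the paper's terser proof leaves implicit.
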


\begin{proof}
We have defined the coordinates for one particular chamber (labeling of the $n$-gon) of $\n{n}$, as defined in Eq.~\eqref{e:coords}.  This coordinate system naturally extends to the canonical coordinates for $\n{n}$ as given in Proposition~\ref{p:embed}, one for each possible split of $n$ species, with at most $n(n-3)/2$ of these coordinates  having nonzero values.  Note further that two associahedral chambers glue to form \M{n} by \emph{twisting}, as given in Theorem~\ref{t:tiling}, while their image in $\n{n}$ under $\Phi$ glue accordingly, due to Theorem~\ref{t:twist}.
\end{proof}

\begin{exmp}
Figure~\ref{f:n4-coord} illustrates the embedding of \M{4} in $\n{4}$, part (a) showing the polygonal labeling and (b) the coordinates of the vertices and the barycenters.  Each associahedral edge sits inside a chamber of $\n{4}$ with coordinates defined by Proposition~\ref{p:embed}.  The edges glue together as do the chambers of $\n{n}$, forming \M{4}. Compare this with Figure~\ref{f:n4}(b). 
\end{exmp}

\begin{figure}[h]
\includegraphics[width=\textwidth]{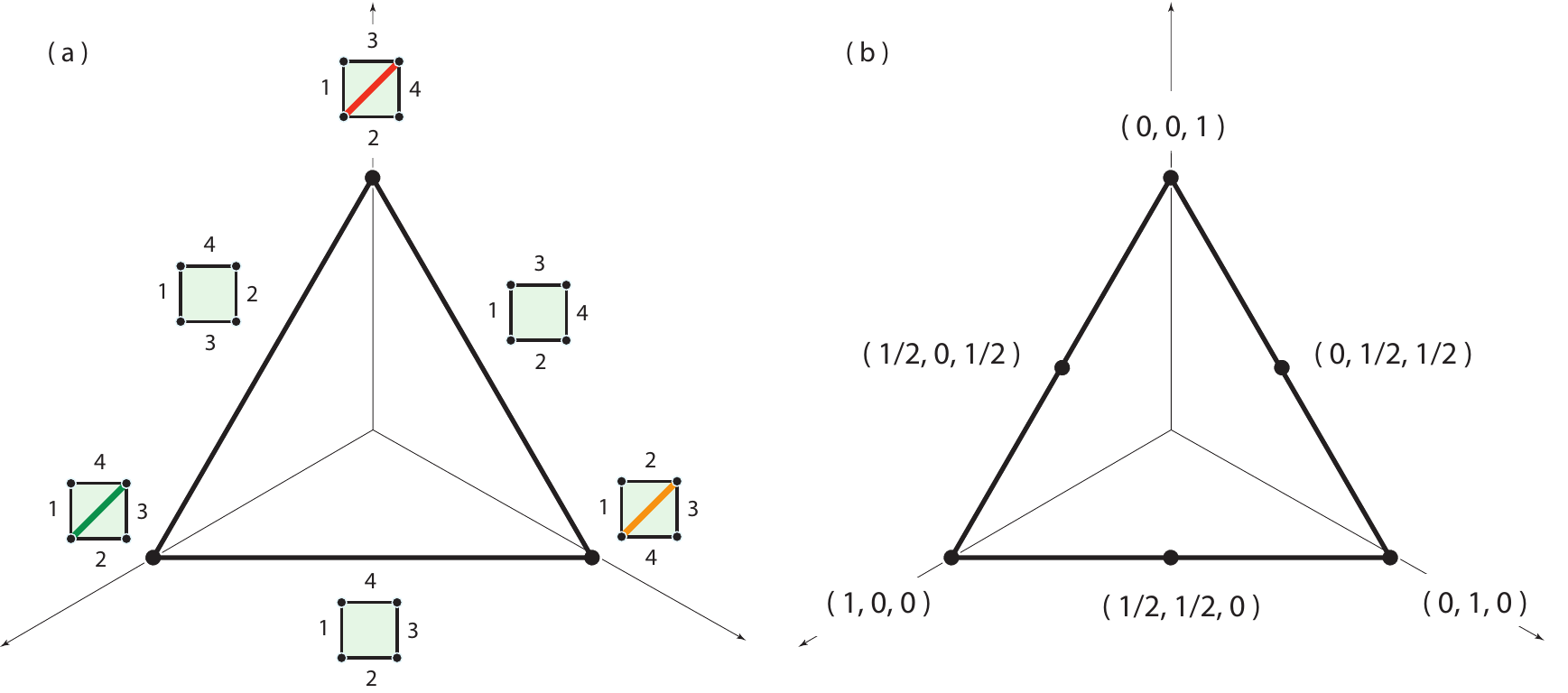}
\caption{Embedding of \M{4} in $\n{4}$ along with coordinates.}
\label{f:n4-coord} 
\end{figure}

\begin{rem}
It is interesting to wonder whether more can be said about the relationship between \M{n} and $\n{n}$.  For example, how do the homotopy types of these two spaces compare?  It was shown in \cite{djs} that \M{n} is \emph{aspherical}, its homotopy properties encapsulated in its fundamental group, whereas little is known about $\n{n}$.
\end{rem}

%%%%%%%%%%%%%%%%%%%%%%%%%%%%%%%%%%%%%%%%%%%%%%%%%%%%%%%%%%%%%%%%%%%%%%%%%%%%%%%%%%%%%%%%%
%                                                      
%                Embeddings
%
%%%%%%%%%%%%%%%%%%%%%%%%%%%%%%%%%%%%%%%%%%%%%%%%%%%%%%%%%%%%%%%%%%%%%%%%%%%%%%%%%%%%%%%%%
\section{Proof of Embeddings}  \label{s:embed}
\subsection{}

In this section, we prove Proposition~\ref{p:embed} which states that the map $\Phi$ defined above is an embedding. We show $\Phi$ is injective and preserves dimension in Lemma~\ref{l:newlem} and Lemma~\ref{l:simplex}, respectively, with embedding directly following. First, consider the following description of $\Phi$ based on the Catalan numbers $C_n$ from Eq.~\eqref{e:catalan}.

\begin{prop} \label{p:cp}
Let $f$ be a face of the associahedron with barycenter $v_f$, represented by a polygon $P$ with noncrossing diagonals.  
If $\Phi$ is the map defined in Eq.~\eqref{e:barycenter}, then the coordinate of \ $\Phi (v_f)$ associated to diagonal $d$ is given by 
\begin{equation}
\label{e:cat-count}
   \Phi_d (v_f) = \left\{
     \begin{array}{lr}
       \displaystyle{\frac{1}{n-3}} & \text{ if $P$ contains $d$} \vspace{.2in} \\ \vspace{.2in}
         0 & \text{ if $d$ crosses $P$}\\
         \displaystyle{\frac{1}{n-3}} \cdot \frac{C_{s-2} \ C_{t-2}}{C_{s+t-4}}&\text{otherwise}
     \end{array}
   \right.
\end{equation}
where the two polygons in $P$ on either side of $d$ are an $s$-gon and $t$-gon.
\end{prop}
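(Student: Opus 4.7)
The plan is to unfold the barycenter formula \eqref{e:barycenter} into a straightforward counting problem on triangulations, then invoke the Catalan enumeration of triangulations. First, I would identify $V_f$ with the set of triangulations $T$ of the $n$-gon that \emph{refine} $P$, i.e., whose diagonal set contains every diagonal of $P$; this is simply the description of which vertices of $K_{n-1}$ are incident to the face $f$. Since each triangulation uses exactly $n-3$ diagonals, Eq.~\eqref{e:facec} gives $\Phi_d(v) = 1/(n-3)$ precisely when $d$ is a diagonal of $T$, and the centroid definition yields
$$\Phi_d(v_f) \ = \ \frac{1}{n-3}\cdot\frac{|\{T \in V_f : d \in T\}|}{|V_f|},$$
reducing the proposition to computing the fraction of refinements of $P$ that use the diagonal $d$.

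Second, I would use the fact that the $k$ noncrossing diagonals of $P$ cut the $n$-gon into $k+1$ subpolygons $Q_1,\dots,Q_{k+1}$, and that a refinement of $P$ to a triangulation is determined by independently triangulating each subpolygon. Since an $m$-gon admits $C_{m-2}$ triangulations,
$$|V_f| \ = \ \prod_{i=1}^{k+1} C_{m_i - 2},$$
where $Q_i$ has $m_i$ sides. The first two cases of \eqref{e:cat-count} are now immediate: if $d$ is a diagonal of $P$ then every $T \in V_f$ contains $d$ and the fraction equals $1$; if $d$ crosses some diagonal of $P$, then no $T \in V_f$ can contain $d$ and the fraction equals $0$.

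For the remaining case, where $d$ does not cross $P$ and is not itself a diagonal of $P$, the two endpoints of $d$ must lie in a single subpolygon $Q_j$ (otherwise $d$ would cross some diagonal of $P$), and then $d$ is a genuine diagonal of $Q_j$. If $Q_j$ is an $m$-gon, the diagonal $d$ partitions it into an $s$-gon and a $t$-gon with $s + t = m + 2$, so the refinements of $P$ that contain $d$ are enumerated by $C_{s-2}\,C_{t-2}\cdot\prod_{i\ne j} C_{m_i - 2}$. Dividing by $|V_f|$ cancels the contributions from the other subpolygons and leaves $C_{s-2}\,C_{t-2}/C_{s+t-4}$; combining with the prefactor $1/(n-3)$ gives exactly the third case of \eqref{e:cat-count}.

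The only point requiring care is the geometric claim in the third case that $d$ lies in a single subpolygon $Q_j$, which is the main (minor) obstacle; it follows because the interior of $d$ meets no diagonal of $P$, forcing both endpoints into the closure of the same cell of the subdivision. Once this is in hand, the argument is a direct Catalan bookkeeping with no further subtlety.
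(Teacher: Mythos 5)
Your proposal is correct and follows essentially the same route as the paper: rewrite the centroid coordinate as $\tfrac{1}{n-3}$ times the fraction of triangulations refining $P$ that contain $d$, then evaluate that fraction with Catalan numbers. The paper's own proof leaves the final product-over-subpolygons cancellation implicit ("the result follows"), whereas you spell it out; this is an expansion of the same argument, not a different one.
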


\begin{proof} 
Recall that for a diagonal $d$ and a vertex $v$, the corresponding coordinate in the embedding $\Phi_d(v)$ is given in Eq.~\eqref{e:facec}. Since $\Phi_d(v_f)$ is the average (centroid) of the set $\{\Phi_d(v) \suchthat  v  \in V_f\}$, as shown in Eq.~\eqref{e:barycenter}, we have
\begin{align*} 
\Phi_d(v_f) \ & = \ \frac{1}{n-3} \ \left(\frac{ \text{\# vertices of $f$ that include diagonal $d$ }}{ \text{\# vertices of $f$}}\right) \\
& = \ \frac{1}{n-3} \ \left(\frac{ \text{\# triangulations that include $P$ and $d$ }}{ \text{\# triangulations that include $P$}}\right).
\end{align*}
The first two cases of Eq.~\eqref{e:cat-count} follow trivially. For the third case, $\Phi_d(v_f)$ depends on the way $d$ sits inside $P$. Adding diagonal $d$ divides an $(s+t-2)$-sided subpolygon of $P$ into an $s$-gon and a $t$-gon.   Since the Catalan number $C_{n-2}$ enumerates the triangulations of the $n$-gon, the result follows.
\end{proof}

\begin{lem} \label{l:iff}
Let $S = \{f_0, f_1, \dots, f_m\}$ be a subflag of the associahedron $K_n$, and let $x$ be a point in $\Phi(\Delta_S)$ such that
$$x \ = \ a_0 \Phi(v_{f_0}) \ + \ a_1\Phi(v_{f_1}) \ + \ \dots \ + \ a_m\Phi(v_{f_m})\, ,$$
where $\sum a_i = 1$ and each $a_i \geq 0$. 
Then diagonal $d$ is in $f_k$ if and only if 
\begin{equation} 
\label{e:eq}
x_d \ = \ \sum_{i=0}^{k-1} \ a_i \ \Phi_d(v_{f_i}) \ + \ \frac{1}{n-3} \cdot \left( 1 - \ \sum_{i=0}^{k-1} a_i \right).
\end{equation}
\end{lem}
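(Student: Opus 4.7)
The plan is to prove the two implications separately, both by direct computation using Proposition~\ref{p:cp}. The essential content of that proposition is a uniform upper bound $\Phi_d(v_{f_i}) \leq \frac{1}{n-3}$, with equality precisely when $d$ is a diagonal of the polygon $f_i$. Combined with the nested structure of the flag, this means that if $d$ is a diagonal of $f_k$ then $d$ is also a diagonal of every $f_i$ sitting above $f_k$ in the chain, so the corresponding $\Phi_d(v_{f_i})$ all attain the maximum value $\frac{1}{n-3}$. This observation is what allows the tail of the convex combination $x_d = \sum_i a_i \Phi_d(v_{f_i})$ to collapse cleanly.

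For the forward implication, assume $d \in f_k$. By the flag nesting just noted, $\Phi_d(v_{f_i}) = \frac{1}{n-3}$ for every $i \geq k$, so the expansion of $x_d$ splits at index $k$ as
$$x_d \;=\; \sum_{i=0}^{k-1} a_i \Phi_d(v_{f_i}) \;+\; \frac{1}{n-3} \sum_{i=k}^m a_i.$$
Substituting $\sum_{i=k}^m a_i = 1 - \sum_{i=0}^{k-1} a_i$ reproduces the right-hand side of \eqref{e:eq} exactly.

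For the converse, subtracting that right-hand side from $x_d$ rearranges to
$$0 \;=\; \sum_{i=k}^m a_i \left( \Phi_d(v_{f_i}) - \frac{1}{n-3} \right).$$
Each summand is nonpositive by Proposition~\ref{p:cp}, so every summand must vanish. Hence $d$ is a diagonal of every $f_i$ with $i \geq k$ and $a_i > 0$; provided $a_k > 0$, this yields $d \in f_k$ immediately. The degenerate case $a_k = 0$ causes no trouble, since one restricts to the sub-subflag consisting of those faces with $a_i > 0$, in whose associated simplex $x$ lies in the relative interior, and then reruns the same argument.

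The only genuine subtlety is orientation-bookkeeping: one must pin down the convention so that ``$d \in f_k$'' correctly propagates forward to the indices $i \geq k$ appearing in the collapsing tail of the sum. Once that is fixed, the argument is essentially a one-line manipulation, and the one-sided inequality $\Phi_d(v_{f_i}) \leq \frac{1}{n-3}$ supplied by Proposition~\ref{p:cp} does all the real work.
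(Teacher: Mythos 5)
Your proof is correct and follows essentially the same route as the paper's: the forward direction uses the flag nesting to get $\Phi_d(v_{f_i}) = 1/(n-3)$ for all $i \geq k$ and then collapses the tail of the convex combination, and the converse uses the bound $\Phi_d(v) \leq 1/(n-3)$ from Proposition~\ref{p:cp} to force term-by-term equality. Your explicit attention to the degenerate case $a_k = 0$ is a point the paper's proof silently skips (equality of the sums only forces $\Phi_d(v_{f_i}) = 1/(n-3)$ at indices with $a_i > 0$), so that extra care is a small improvement rather than a divergence.
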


\begin{proof} 
If diagonal $d \in f_k$, then $d \in f_i$, for all $i \geq k$.  Proposition~\ref{p:cp} then implies that $\Phi_d(v_{f_i})=1/(n-3)$, for all $i \geq k$, and the result follows.
Conversely, Eq.~\eqref{e:eq} yields
$$\sum_{i=0}^{k-1} \ a_i \ \Phi_d(v_{f_i})  \ = \  \frac{1}{n-3} \cdot \sum_{i=k}^{m} \ a_i \, .$$
Since each $\Phi_d(v) \leq 1/(n-3)$, it follows by Proposition~\ref{p:cp} that $\Phi_d(v_{f_i}) = 1/(n-3)$, for all $i \geq k$, and the result follows.
\end{proof}

\begin{lem} 
\label{l:newlem}
Let $x$ be a point in the image of $\Phi$. Then there exists a unique subflag $\{ f_0, f_1, \dots f_m\}$ and coefficients $a_0, a_1, \dots a_m$, where $a_i> 0$ and $\sum a_i = 1$, such that 
$$x \ = \ a_0 \Phi(v_{f_0}) \ + \ a_1\Phi(v_{f_1}) \ + \ \dots \ + \ a_m\Phi(v_{f_m}).$$
\end{lem}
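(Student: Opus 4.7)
\emph{Proposal.} The statement has two halves: existence of the representation, which is essentially tautological, and its uniqueness, which is the heart of the matter. The plan is to produce the representation by invoking Proposition~\ref{p:embed} directly, then to derive uniqueness in two stages—first showing that the subflag is determined by $x$, then showing the coefficients are.

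For existence, the image of $\Phi$ is a union of convex hulls of the form $\Phi(\Delta_F)$ over all flags $F$ of $K_{n-1}$. So $x$ lies in some $\Phi(\Delta_F)$ for a full flag $F = \{f_0, \ldots, f_{n-3}\}$, yielding $x = \sum_i c_i \Phi(v_{f_i})$ with $c_i \geq 0$ summing to one. Discarding the vanishing terms gives a subflag and strictly positive coefficients as required.

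For uniqueness, suppose there are two such representations $x = \sum_{i=0}^m a_i \Phi(v_{f_i}) = \sum_{j=0}^{m'} b_j \Phi(v_{f'_j})$ with positive coefficients. I would first argue that the subflags agree. The key tool is Lemma~\ref{l:iff}, which characterizes $d \in f_k$ via Eq.~\eqref{e:eq}. Starting from the base case, observe that the set $\{d : x_d = 1/(n-3)\}$ depends only on $x$: by Proposition~\ref{p:cp} each $\Phi_d(v_{f_i}) \leq 1/(n-3)$, so $x_d$ saturates at $1/(n-3)$ precisely when $\Phi_d(v_{f_i}) = 1/(n-3)$ for every $i$ appearing in the subflag (using $a_i > 0$), i.e.\ when $d$ is a diagonal of every $f_i$. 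This pins down the extremal element of the subflag independently of the chosen representation. Then iterate: at each stage, the already-identified flag and coefficient prefixes let Eq.~\eqref{e:eq} identify the next $f_k$, and the next coefficient $a_k$ is read off from any coordinate $x_d$ with $d$ newly appearing at level $k$. Once the subflag is fixed, the coefficients are unique because $\Phi(v_{f_0}), \ldots, \Phi(v_{f_m})$ are affinely independent, which is precisely the content of the subsequent Lemma~\ref{l:simplex}.

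The main obstacle will be this inductive recovery of the flag: Lemma~\ref{l:iff} couples each $f_k$ to the coefficients $a_0, \ldots, a_{k-1}$, which themselves presuppose $f_0, \ldots, f_{k-1}$. The remedy is to interleave the two recoveries—at step $k$, use the prefix data plus the saturation condition in Eq.~\eqref{e:eq} to first identify $f_k$, and then extract $a_k$ from a coordinate at the new level. Only the base step must be grounded solely in $x$, and that is exactly what the $1/(n-3)$ saturation argument provides.
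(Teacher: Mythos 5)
Your overall strategy---recovering the subflag and the coefficients directly from the coordinates of $x$, anchored by the observation that $x_d = 1/(n-3)$ exactly when $d$ lies in every face of the subflag---is the same as the paper's, and both your base case and your appeal to affine independence (Lemma~\ref{l:simplex}) for pinning down the coefficients once the subflag is known are sound. The existence half is handled the same way in both arguments.

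The gap is in the inductive step, and it is precisely the circularity you name without actually breaking. Suppose you have recovered $f_0,\dots,f_j$ and $a_0,\dots,a_{j-1}$. To identify $f_{j+1}$ from Eq.~\eqref{e:eq} you must already know $a_j$; to read $a_j$ off a coordinate $x_d$ with $d\in f_{j+1}\setminus f_j$ you must already know which diagonals those are. Saying you will ``interleave the two recoveries'' restates this tension rather than resolving it: for a diagonal $d\notin f_j$ the residue $x_d-\sum_{i<j}a_i\,\Phi_d(v_{f_i})$ equals $\sum_{i\ge j}a_i\,\Phi_d(v_{f_i})$, and the values $\Phi_d(v_{f_i})$ for $i>j$ depend on the not-yet-identified faces, so a single coordinate cannot tell you whether $d$ enters the flag at step $j+1$, enters later, or never enters. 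The paper breaks the circle with an ingredient your proposal lacks: for each subpolygon $P_r$ of $f_j$ it examines the residues on the set $D$ of diagonals cutting a triangle off $P_r$, uses the Catalan ratio of Proposition~\ref{p:cp} to test (the Claim in the paper's proof) whether any diagonal interior to $P_r$ ever appears in the flag, and shows that the minimum of these residues over $D$ computes a known multiple of $a_j+\dots+a_{k-1}$, where $f_k$ is the first face containing a diagonal of $P_r$; minimizing again over all subpolygons isolates $a_j$, after which Lemma~\ref{l:iff} determines $f_{j+1}$. The same subpolygon test is what detects termination, i.e.\ that $f_j$ is the last face of the subflag---another point your outline leaves open (and note that the coordinate ``at the new level'' $f_k$ would in any case yield $a_{k-1}$, not $a_k$, so the indices in your interleaving are also off by one). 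Without some substitute for this step, your induction does not close.
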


\begin{proof} 
We present an algorithm that uniquely determines $a_i$ and $f_i$ from the coordinates of $x$ through an iterative procedure. 
As the base case, by Lemma \ref{l:iff}, $f_0$ is the set of diagonals $d$ such that $x_d=1/(n-3)$. 
The procedure assumes that for some $j$, we know $f_0, f_1, \dots, f_j$ and $a_0, a_1, \dots, a_{j-1}$, and determines $a_j$ by either
\begin{enumerate}[(A)]
\item concluding $f_j$ is the maximal set of diagonals in the subflag, \emph{or}
\item identifying the next element $f_{j+1}$ in the subflag.
\end{enumerate}
In each iteration, every subpolygon\footnote{We say $S$ is a subpolygon of a polygonal representation $P$ if the boundary of $S$ consists of edges and diagonals of $P$ and no diagonals of the polygonal representation are drawn in the interior of $S$.} of the polygonal representation of $f_j$ is tested to obtain information that is used to determine $a_j$ and $f_{j+1}$, if applicable. 

Let $P$ be the polygonal representation of $f_j$, and $P_r$ be a subpolygon of $P$ with $r$ sides.  Let $D$ the set of diagonals in $P_r$ that partition it into a 3-gon and an $(r-1)-$gon. The following relates diagonals $d$ that can be drawn within $P_r$ to  coordinate values $x_d$.

\begin{claim}
Diagonals in $P_r$ are not in any set of diagonals of the subflag if and only if 
\begin{equation}
\label{e:star}  
x_d \ =  \ \sum_{i=0}^{j-1} \ a_i \ \Phi_d(v_{f_i}) \ + \ \frac{1}{n-3} \cdot \frac{C_{r-1}}{C_r} \cdot \left( 1 - \ \sum_{i=0}^{j-1} a_i \right).
\end{equation}
for all diagonals $d$ of $D$: 
\end{claim}

\begin{proof}
If no diagonal in $P_r$ is contained in any diagonals of the subflag, then $P$ contains the region $P_r$, for all $i\geq j$. Therefore, by Proposition~\ref{p:cp}, 
$$\Phi_d(v_{f_i}) \ = \ \frac{1}{n-3} \cdot \frac{C_{r-1}}{C_r} \, ,$$
for $d$ in $D$ and $i \geq j$, and Eq.~\eqref{e:star} follows. Conversely, suppose there is a diagonal $d_\ast$ in $P_r$ that is contained in diagonals of the subflag. Let $f_k$ be the face containing the minimal such set of diagonals. For a diagonal $d$ in $D$ that intersects $d_\ast$, $\Phi_{d}(v_{f_i}) = 0$  for all $i \geq k$  by Proposition~\ref{p:cp}.  Similarly, since $P_r$ is in face $f_i$ for all $j\leq i\leq k-1$, then
\begin{equation}
\label{e:minisum}
\Phi_{d}(v_{f_i}) \ = \ \frac{1}{n-3} \cdot \frac{C_{r-1}}{C_r} \, .
\end{equation} 
But because 
\begin{align*}
x_{d}& 
\ = \ \sum_{i=0}^{j-1} a_i \ \Phi_{d}(p_i)
\ + \ \frac{1}{n-3} \cdot \frac{C_{r-1}}{C_r} \cdot \sum_{i=j}^{k-1} a_i 
\ + \ 0 \cdot \sum_{i=k}^{m} a_i \\
&
\ < \ \sum_{i=0}^{j-1} a_i \ \Phi_{d}(p_i)
\ + \ \frac{1}{n-3} \cdot \frac{C_{r-1}}{C_r} \cdot \sum_{i=j}^{m} a_i\, ,
\end{align*}
Eq.~\eqref{e:star} does not hold for $d$, justifying the claim.
\end{proof}

If for \emph{every} subpolygon of $P$, diagonals in $P_r$ are not in any set of diagonals of the subflag, then $f_j$ is the maximal split system in the subflag.  It follows that $a_j = 1-a_0-a_1-\dots-a_{j-1}$, resulting in case (A) and ending the algorithm.
Otherwise, there exists a diagonal in some subpolygon $P_r$ of $f_j$ contained in some set of diagonals of the subflag, resulting in case (B).  For each such subpolygon, we compute  $a_j+a_{j+1}+ \dots + a_{k-1}$, where $k$ is such that $f_k$ is the minimal set of diagonals containing a diagonal from $P_r$, and we use this information to determine the diagonals of $f_{j+1}$ and $a_j$. 

First we show how to compute $a_j+a_{j+1}+ \dots + a_{k-1}$, where $k$ is such that $f_k$ is the minimal set of diagonals containing a diagonal from $P_r$. Let  $d_\ast$ be a diagonal in $f_k$ and $P_r$. For all diagonals $d$ in $T$, since $P_r$ is in $f_i$, for all $j\leq i\leq k-1$, Eq.~\eqref{e:minisum} holds, and it follows that
\begin{equation} 
\label{e:startwo}
x_d \ - \ \sum_{i=0}^{j-1} a_i \ \Phi_d(v_{f_i})  \ = \ \frac{1}{n-3} \cdot \frac{C_{r-1}}{C_r} \cdot \sum_{i=j}^{k-1} a_i  \ + \  \sum_{i=k}^{m} a_i \ \Phi_d(v_{f_i}).
\end{equation}  
For a diagonal $d$ in $T$ that crosses a diagonal of $f_k$, $\Phi_d(v_{f_i})=0$, for $i\geq k$, and so Eq.~\eqref{e:startwo} becomes 
$$x_d \ - \ \sum_{i=0}^{j-1} a_i \ \Phi_d(v_{f_i})  \ = \ \frac{1}{n-3} \cdot \frac{C_{r-1}}{C_r} \cdot \sum_{i=j}^{k-1} a_i.$$ 
If $d$ does not cross diagonals in $f_k$, then $\Phi(v_{f_k})>0$ and Eq.~\eqref{e:startwo} becomes
$$x_d \ - \ \sum_{i=0}^{j-1} a_i \ \Phi_d(v_{f_i})  \ > \ \frac{1}{n-3} \cdot \frac{C_{r-1}}{C_r} \cdot \sum_{i=j}^{k-1} a_i.$$ 
Since at least one diagonal $d$ in $T$ intersects $f_k$, 
$$\min \left\{ x_d \ -\ \sum_{i=0}^{j-1} a_i \ \Phi_d(v_{f_i}) \suchthat d \in T\ \right\} \ = \ \frac{1}{n-3} \cdot \frac{C_{r-1}}{C_r} \cdot \sum_{i=j}^{k-1} a_i\, ,$$
allowing us to solve for $a_{j}+ \dots + a_{k-1}$ by computing this minimum.

Now we use the values $a_{j}+ \dots + a_{k-1}$ for each subpolygon to determine $a_j$ and $f_{j+1}$. If $P_r$ contains a diagonal in $f_{j+1} \setminus f_j$, the value returned by the subroutine on $P_r$ must be $a_j$. But if $P_r$ does not contain such a diagonal, the returned value is at least $a_j + a_{j+1}$.  Thus, the minimum value returned over all subpolygons of $f_j$ is $a_j$. Since we know $a_0, a_1, \dots a_j$ and $f_0, f_1, \dots f_{j}$, use Lemma~\ref{l:iff} to test whether this statement holds for each diagonal $d$, and in doing so determine precisely which diagonals are in $f_{j+1}$.  

This process is repeated with the additional known values $a_j$ and $f_{j+1}$ until case (A) is reached, at which point the values of $a_i$ and $f_i$ are fully determined.  Since at no point was there freedom in choosing values for $a_i$ or $f_i$, these values are uniquely determined.
\end{proof}

The following shows $\Phi$ preserving the dimension of the simplicial subdivision. 

\begin{lem} \label{l:simplex}
If $F$ is a flag of the associahedron $K_{n-1}$, then $\Phi(\Delta_F)$ is an $(n-3)$-simplex. More generally, if $S$ is a subflag of $K_n$, then $\Phi(\Delta_S)$ is an $(|S|-1)$-simplex. 
\end{lem}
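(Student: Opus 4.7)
The plan is to reduce the statement to showing that the $m+1$ barycenters $\Phi(v_{f_0}), \ldots, \Phi(v_{f_m})$ are affinely independent in $\R^\delta$, where $S = \{f_0, \ldots, f_m\}$. Once this is established, the convex hull $\Phi(\Delta_S)$ is automatically a simplex of dimension $|S|-1$, and the flag case $|F| = n-2$ is the specialization yielding an $(n-3)$-simplex.

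The key idea is to leverage the uniqueness asserted in Lemma~\ref{l:newlem}. Suppose for contradiction that the barycenters are affinely dependent, so that there exist constants $c_0, \ldots, c_m$, not all zero, with $\sum c_i = 0$ and $\sum c_i\, \Phi(v_{f_i}) = 0$. Consider the centroid
$$g \ := \ \frac{1}{m+1}\sum_{i=0}^{m} \Phi(v_{f_i})\, ,$$
which lies in $\Phi(\Delta_S)$ and hence in the image of $\Phi$. This is a positive convex combination of the barycenters indexed by the subflag $S$. For all sufficiently small $\epsilon > 0$, the perturbed coefficients $a_i := \tfrac{1}{m+1} + \epsilon c_i$ remain strictly positive, still sum to $1$, and satisfy $\sum a_i\, \Phi(v_{f_i}) = g$ since the perturbation vanishes under $\sum c_i\, \Phi(v_{f_i}) = 0$. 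This produces a second positive convex combination of the same barycenters, contradicting the uniqueness conclusion of Lemma~\ref{l:newlem}. Hence every $c_i = 0$, the barycenters are affinely independent, and $\Phi(\Delta_S)$ is an $(|S|-1)$-simplex.

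The step needing the most care is confirming that Lemma~\ref{l:newlem} truly rules out two distinct positive combinations supported on the same subflag. This is immediate from its statement, which asserts uniqueness of \emph{both} the subflag and the coefficients: since the two combinations here share subflag $S$, their coefficient vectors must coincide, forcing $\epsilon c_i = 0$ for every $i$. As a sanity check, a more hands-on alternative would pick for each $i \in \{1,\ldots,m\}$ a diagonal $d_i \in f_{i-1} \setminus f_i$ (nonempty by the strict nesting of the subflag) and invoke Proposition~\ref{p:cp} to record that $\Phi_{d_i}(v_{f_j}) = \tfrac{1}{n-3}$ for $j < i$ while $\Phi_{d_i}(v_{f_j}) < \tfrac{1}{n-3}$ for $j \geq i$; combined with $\sum c_j = 0$, the resulting upper-triangular system forces the $c_j$ to vanish by descending induction starting from $j = m$.
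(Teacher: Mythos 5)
Your argument is correct, but your main route is genuinely different from the paper's. The paper proves the lemma directly by induction along the flag: having shown $\Phi(\Delta_{S_i})$ is an $i$-simplex, it picks the diagonal $d$ by which $f_i$ and $f_{i+1}$ differ and uses Proposition~\ref{p:cp} (via Lemma~\ref{l:iff}) to note that $\Phi_d(v_{f_{i+1}})=1/(n-3)$ while $\Phi_d(v_{f_j})<1/(n-3)$ for the earlier barycenters, so the new point cannot lie in the convex hull of the previous ones --- this is essentially the ``sanity check'' you sketch at the end, so you have the paper's proof in hand as your fallback. Your primary argument instead derives affine independence from the uniqueness statement of Lemma~\ref{l:newlem} by a perturbation of the centroid: any affine dependence would yield two distinct strictly positive coefficient vectors representing the same point over the same subflag, contradicting uniqueness. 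This is valid and there is no circularity, since the paper proves Lemma~\ref{l:newlem} by an algorithm relying only on Lemma~\ref{l:iff} and Proposition~\ref{p:cp}, not on Lemma~\ref{l:simplex}. What the paper's approach buys is independence of the two ingredients of Proposition~\ref{p:embed} (injectivity and dimension preservation are established separately from elementary coordinate computations); what yours buys is brevity, at the cost of making the dimension count logically downstream of the harder injectivity lemma. The only point deserving explicit mention in your write-up is the strictness $\Phi_d(v_{f_j})<1/(n-3)$ when $d\notin f_j$, which holds because the Catalan ratio $C_{s-2}C_{t-2}/C_{s+t-4}$ in Proposition~\ref{p:cp} is strictly less than $1$ for $s,t\ge 3$.
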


\begin{proof} 
Let $F=\{f_0, f_1, \dots f_{n-3}\}$ be a flag of the associahedron $K_{n-1}$ and let $S_i =\{ f_0, f_1, \dots f_i\}$. Proceed by induction on $i$. Since $S_0$ has cardinality one, $\Phi(v_{f_0})$ is a point in $\n{n}$.  Now assume $\Phi(\Delta_{S_i})$ is an $i$-simplex, the convex hull of $\Phi(v_{f_0}),  \dots \Phi(v_{f_{i}})$. In order to show $\Phi(\Delta_{S_{i+1}})$ is an $(i+1)$-simplex, it suffices to show that $\Phi(v_{f_{i+1}})$ is not in the convex hull of $\Phi(v_{f_0}),  \dots \Phi(v_{f_{i}})$. 
Let $d$ be the diagonal in $f_{i+1}$ that is not in $f_i$. By Lemma \ref{l:iff}, $\Phi_d(v_{f_{i+1}})=1/(n-3)$ and $\Phi_d(v_{f_j}) < 1/(n-3)$, for $j<i+1$, demonstrating the claim.

And since $\Phi(\Delta_F)$ is an $(n-3)$-simplex, arising as the convex hull of the $n-2$ points $\Phi(v_{F_0}),  \dots \Phi(v_{F_{n-3}})$, the convex hull of any subset of these points (coming from a subflag) will form a simplex with dimension one less than the cardinality of the subset.
\end{proof}

%%%%%%%%%%%%%%%%%%%%%%%%%%%%%%%%%%%%%%%%%%%%%%%%%%%%%%%%%%%%%%%%%%%%%%%%%%%%%%%%%%%%%%%%%
%
%                  REFERENCES
%
%%%%%%%%%%%%%%%%%%%%%%%%%%%%%%%%%%%%%%%%%%%%%%%%%%%%%%%%%%%%%%%%%%%%%%%%%%%%%%%%%%%%%%%%%
\bibliographystyle{amsplain}

\end{document}